\numberwithin{equation}{section}
\newtheorem{theorem}{Theorem}
\newtheorem{prop}{Proposition}
\newtheorem{lemma}{Lemma}
\newtheorem{cor}{Corollary}
\newtheorem{remark}{Remark}
\newcommand{\vphi}{\varphi}
\newcommand{\pa}{\partial}
\newcommand{\eps}{\varepsilon}
\newcommand{\n}{\nabla}
\newcommand{\e}{\epsilon} 
\newcommand{\N}{\mathbb{N}}
\newcommand{\R}{\mathbb{R}}
\newcommand{\cC}{\mathcal{C}}
\newcommand{\cL}{\mathcal{L}}
\renewcommand{\epsilon}{\varepsilon}
        \definecolor{pink}{rgb}{1,0,1}
\begin{document} 

\title{Eigenvalues of collapsing domains and drift Laplacians}
\author[Zhiqin Lu]{Zhiqin Lu}
\address{Department of Mathematics\\ University of California, Irvine, USA.}  \email{zlu@math.uci.edu}  
\author[Julie Rowlett]{Julie Rowlett}
\address{Max Planck Institut f\"ur Mathematik\\ Vivatgasse 7, Bonn Germany.} \email{rowlett@mpim-bonn.mpg.de}
\date{}
 

\begin{abstract}
By introducing a weight function to the Laplace operator, Bakry and \'Emery defined the ``drift Laplacian'' to study diffusion processes.  Our first main result is that, given a Bakry-\'Emery manifold, there is a naturally associated family of graphs whose eigenvalues converge to the eigenvalues of the drift Laplacian as the graphs collapse to the manifold.  Applications of this result include a new relationship between Dirichlet eigenvalues of domains in $\R^n$ and Neumann eigenvalues of domains in $\R^{n+1}$ and a new maximum principle.  Using our main result and maximum principle, we are able to generalize \emph{all the results in Riemannian geometry based on gradient estimates to Bakry-\'Emery manifolds}.
\end{abstract}

\maketitle

\section{Introduction}
Bakry-\'Emery geometry was introduced in \cite{BE} to study
diffusion processes.  For a Riemannian manifold $(M, g)$ and $\phi \in \cC^2 (M)$, the Bakry-\'Emery manifold is a triple
$(M,g,\phi)$, where the measure on $M$ is the weighted measure
$e^{-\phi} dV_g$.  If ${\rm Ric}$ and $\Delta$ are, respectively, the Ricci curvature and Laplacian with respect to the Riemannian metric $g$, then the Bakry-\'Emery Ricci curvature is defined to
be\footnote{In the notation of \cite{lott}, this is the $\infty$
Bakry-\'Emery Ricci curvature.}
\[
{\rm Ric}_\infty={\rm Ric}+{\rm Hess}(\phi),
\]
and the Bakry-\'Emery Laplacian is
\[
\Delta_\phi=\Delta - \nabla\phi\cdot\nabla.
\]  
The operator can be extended as  a self-adjoint operator with respect
to the weighted measure $e^{-\phi} dV_g$; it is also known as a ``drifting'' or ``drift'' Laplacian.

\begin{theorem}\label{thm1}   
Let $(M, g, \phi)$ be a compact Bakry-\'Emery manifold.  Let 
$$M_{\eps} := \{ (x, y)\mid x \in M, \quad 0 \leq y \leq \eps e^{-\phi(x)} \} \subset M \times \R^+,$$
with $\phi \in \cC^2 (M)$ and $e^{-\phi} \in \cC (M \cup \pa M)$.  Let $\{\mu_k \}_{k=0} ^{\infty}$ be the eigenvalues of the Bakry-\'Emery Laplacian on $M$.  If $\pa M \neq \emptyset$, assume the Neumann boundary condition.  Let $\mu_{k}(\eps)$ be the Neumann eigenvalues of $M_\eps$ for $\tilde{\Delta} := \Delta + \pa_y ^2$.  Then $ \mu_k(\eps)=\mu_k+O(\eps^2)$ for $k\geq 0$.
\end{theorem}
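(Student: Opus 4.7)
The plan is to apply the min-max (variational) principle to both $\mu_k$ and $\mu_k(\eps)$, using as a link the identity
\[
\int_{M_\eps}F(x)\,dV_g\,dy = \eps\int_M F(x)\,e^{-\phi(x)}\,dV_g,
\]
which pushes the flat measure on the thin domain $M_\eps$ forward to $\eps$ times the weighted measure $e^{-\phi}dV_g$ on $M$ and makes $\Delta_\phi$ appear as the effective limit operator.

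For the upper bound, take $L^2(M,e^{-\phi}dV_g)$-orthonormal eigenfunctions $\varphi_0,\ldots,\varphi_k$ of $-\Delta_\phi$ (satisfying Neumann conditions on $\partial M$ if nonempty) and extend constantly in $y$ to $\tilde\varphi_j(x,y):=\varphi_j(x)\in H^1(M_\eps)$. A direct computation gives $\int_{M_\eps}|\tilde\nabla\tilde\varphi_j|^2 = \eps\mu_j$ and $\int_{M_\eps}\tilde\varphi_j^2 = \eps$, so every element of $\mathrm{span}(\tilde\varphi_0,\ldots,\tilde\varphi_k)$ has Rayleigh quotient at most $\mu_k$. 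By min-max on $M_\eps$, $\mu_k(\eps)\leq\mu_k$ with no error on this side.

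For the lower bound, let $u_0^\eps,\ldots,u_k^\eps$ be $L^2(M_\eps)$-orthonormal Neumann eigenfunctions of $\tilde\Delta$ with eigenvalues $\mu_j(\eps)$, and set $\bar u_j(x):=\frac{1}{\eps e^{-\phi(x)}}\int_0^{\eps e^{-\phi(x)}}u_j^\eps(x,y)\,dy$, with decomposition $u_j^\eps=\bar u_j+v_j$. The fiberwise Poincar\'e inequality gives $\int_{M_\eps}v_j^2\leq C\eps^2\mu_j(\eps)$, and differentiating the zero-mean relation $\int_0^{\eps e^{-\phi(x)}}v_j(x,y)\,dy\equiv 0$ under the variable upper limit produces the cross-term identity
\[
\int_{M_\eps}\nabla\bar u\cdot\nabla v\,dV_g\,dy = \eps\int_M(\nabla\bar u\cdot\nabla\phi)\,v(x,\eps e^{-\phi(x)})\,e^{-\phi}\,dV_g,
\]
a trace on the curved top face of $M_\eps$. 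Together with $\int_{M_\eps}u^2=\eps\int_M\bar u^2 e^{-\phi}+\int_{M_\eps}v^2$ and the analogous Dirichlet-energy identity, the eigenvalue equation $\int_{M_\eps}|\tilde\nabla u|^2=\mu_k(\eps)\int_{M_\eps}u^2$ for $u\in\mathrm{span}(u_0^\eps,\ldots,u_k^\eps)$ yields a Rayleigh-quotient bound of the form $\int_M|\nabla\bar u|^2 e^{-\phi}\leq \mu_k(\eps)\int_M\bar u^2 e^{-\phi}+O(\eps^2)$. Smallness of the $v_j$ forces the $\bar u_j$ to be linearly independent for small $\eps$, so applying min-max for $\Delta_\phi$ to $\mathrm{span}(\bar u_0,\ldots,\bar u_k)$ gives $\mu_k\leq\mu_k(\eps)+O(\eps^2)$.

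The main difficulty is the trace estimate: the naive Poincar\'e-trace bound only delivers $\|v(x,\eps e^{-\phi(x)})\|_{L^2(M,e^{-\phi}dV_g)}=O(\eps)$, which would yield only a $O(\eps)$ remainder. To sharpen this we bootstrap using the equation $-\Delta u-\partial_y^2 u=\mu_k(\eps)u$ together with both Neumann conditions: $\partial_y u=0$ on $y=0$ and $\partial_y u=-\eps e^{-\phi(x)}\nabla\phi\cdot\nabla u$ on the slanted top $y=\eps e^{-\phi(x)}$. These constraints force $\partial_y u=O(\eps)$ pointwise and hence $v=O(\eps^2)$ pointwise, with leading profile proportional to $\nabla\phi\cdot\nabla\bar u$---precisely the combination that produces the drift term $-\nabla\phi\cdot\nabla$ in the effective Bakry--\'Emery Laplacian.
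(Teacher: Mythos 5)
Your overall strategy is sound and tracks the paper's intuition: the upper bound by extending the Bakry--\'Emery eigenfunctions constantly in $y$ (giving $\mu_k(\eps)\le\mu_k$ cleanly, as the paper also gets in Lemma~\ref{lemq1}); the lower bound by fiber-averaging and estimating the difference $v=u-\bar u$; and the identification of $-\nabla\phi\cdot\nabla\bar u$ as the leading $y^2$-profile of $v$ (this is exactly the paper's $\eta_k=-\nabla\phi\cdot\nabla\psi_k$ and $U_k=\psi_k+\tfrac12 y^2\eta_k$).

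However, there is a genuine gap at the only place where the $O(\eps^2)$ rate is actually won. You write that the PDE together with the two Neumann conditions ``force $\partial_y u = O(\eps)$ pointwise and hence $v=O(\eps^2)$ pointwise.'' This is not a routine consequence. To deduce $\partial_y u = O(\eps)$ from $\partial_y^2 u = -\Delta u - \mu u$ and $\partial_y u|_{y=0}=0$, you need a \emph{uniform-in-$\eps$} bound on the tangential second derivatives of $u$ on a domain that is collapsing, whose top face $y=\eps e^{-\phi(x)}$ is slanted with slope $O(\eps)$, and whose normalization $\|u\|_{L^2(M_\eps)}=\sqrt\eps$ degenerates. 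Standard interior or boundary Schauder estimates blow up as $\eps\to0$ on such domains. The paper devotes all of Section~5 to precisely this: it introduces $\eps$-weighted H\"older norms in which the constants are $\eps$-independent, constructs the correction $w_{k,\eps}=\alpha_{k,k}\varphi_{k,\eps}+U_k+\sum_{j<k}\alpha_{k,j}\varphi_{j,\eps}$, proves $\alpha_{k,j}=O(\eps^2)$ (Lemma~\ref{lemm1}), runs a Moser iteration to get $\|w_{k,\eps}\|_{C^0}=O(\eps^2)$, and then combines a barrier/maximum-principle argument with the weighted Schauder estimates to prove $\partial_y w_{k,\eps}=O(\eps^2)$ and finally Theorem~\ref{thm2}. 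Only after this does the bound $A_k=\sum_j\int_{M_\eps}|\partial_y\varphi_{j,\eps}|^2=O(\eps^3)$ become available, closing the gap between the coarse $O(\eps)$ estimate (Lemma~\ref{lemq3}) and the claimed $O(\eps^2)$. Your one-sentence bootstrap compresses this into an assertion; as written it does not constitute a proof of the hardest step, and you would also need to address the inductive structure the paper uses (the proof assumes the result for $j<k$ and, via Uhlenbeck's genericity theorem, assumes simplicity of $\mu_1,\dots,\mu_k$ to control $\alpha_{k,j}$). If you want to keep the fiber-averaging framework, you must supply the $\eps$-uniform elliptic regularity theory that justifies the pointwise $v=O(\eps^2)$ claim; that is where the substance of the theorem lies.
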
  

A corollary of Theorem~\ref{thm1} gives a relationship between the Dirichlet eigenvalues in $\R^n$ and Neumann eigenvalues in $\R^{n+1}$.  

\begin{cor}\label{cor-11}
Let $M$ be a bounded domain in $\mathbb R^n$ with smooth boundary, and let $\phi_1$ be
the first Dirichlet eigenfunction of the Euclidean Laplacian on
$M$.  Define
$$M_\eps :=\{(x,y)\in\mathbb R^{n+1}\mid x\in M,0\leq y\leq
\eps\phi_1(x)^2\}.$$
Let $\{ \lambda_k\}_{k=1} ^{\infty}$ be the Dirichlet eigenvalues of
$M$, and let $\{ \mu_{k}(\eps) \}_{k=0} ^{\infty}$ be the Neumann
eigenvalues of $M_\eps$. Then $\lim_{\eps \to 0} \mu_{k-1}(\eps) = \lambda_k-\lambda_1$, for all $k \in \N$.  
\end{cor}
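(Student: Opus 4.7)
The plan is to recognize Corollary~\ref{cor-11} as a direct application of Theorem~\ref{thm1} with the specific weight $\phi := -2\log \phi_1$. Since $\phi_1 > 0$ in the interior of $M$ by the strong maximum principle and $\phi_1 \in \cC^\infty(\bar M)$ by elliptic regularity, $\phi \in \cC^2(M)$, and $e^{-\phi} = \phi_1^2$ extends smoothly to $\bar M$ vanishing on $\pa M$, so $e^{-\phi} \in \cC(M \cup \pa M)$. With this choice, the collapsing domain $M_\eps$ of the corollary coincides exactly with the domain $M_\eps$ of Theorem~\ref{thm1}. It therefore suffices to identify the spectrum of the drift Laplacian $\Delta_\phi = \Delta + \frac{2}{\phi_1}\nabla\phi_1\cdot\nabla$ on $(M, g, -2\log\phi_1)$ with the shifted Dirichlet spectrum $\{\lambda_k - \lambda_1\}_{k\ge 1}$.

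The key calculation is the conjugation $u \mapsto v := u\,\phi_1$. Expanding $\Delta v = \phi_1\,\Delta u + 2\nabla\phi_1\cdot\nabla u + u\,\Delta\phi_1$ and using $\Delta\phi_1 = -\lambda_1\phi_1$, I would show
\[
\Delta v = \phi_1\,\Delta_\phi u - \lambda_1 v.
\]
Consequently $\Delta_\phi u = -\mu u$ if and only if $\Delta v = -(\mu + \lambda_1)\,v$. Since $\phi_1 = 0$ on $\pa M$, the map $u \mapsto u\phi_1$ carries eigenfunctions of $\Delta_\phi$ (satisfying the natural, weighted Neumann condition required by Theorem~\ref{thm1}) to Dirichlet eigenfunctions of $\Delta$. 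Conversely, by the Hopf lemma $\pa_\nu\phi_1 < 0$ on $\pa M$, so for any Dirichlet eigenfunction $v_k$ the quotient $u_k := v_k/\phi_1$ extends continuously to $\bar M$ with nonzero boundary value, producing a bona fide $\Delta_\phi$-eigenfunction. The transformation is an isometry of Hilbert spaces, since
\[
\int_M u_j u_k\, \phi_1^2\, dx = \int_M v_j v_k\, dx,
\]
so it gives a bijection between the two spectra. The lowest drift eigenvalue is $\mu_0 = 0$ (corresponding to $u \equiv 1$, $v = \phi_1$, $\lambda = \lambda_1$), and in general $\mu_{k-1} = \lambda_k - \lambda_1$. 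Plugging this identification into the conclusion $\mu_{k-1}(\eps) = \mu_{k-1} + O(\eps^2)$ of Theorem~\ref{thm1} yields the stated limit.

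The main obstacle is the mild singularity of $\phi = -2\log\phi_1$ at $\pa M$: although $e^{-\phi} = \phi_1^2$ is smooth up to the boundary (so Theorem~\ref{thm1} applies as stated), the drift term $\frac{2}{\phi_1}\nabla\phi_1$ blows up, and one must verify rigorously that the formal correspondence $u \leftrightarrow u\phi_1$ is actually a unitary equivalence between the self-adjoint realization of $\Delta_\phi$ on $L^2(M, \phi_1^2\,dx)$ (with weighted Neumann condition) and the Dirichlet Laplacian on $L^2(M, dx)$. This reduces to checking that the natural quadratic form $\int_M|\nabla u|^2 \phi_1^2\,dx$ on $H^1(M,\phi_1^2\,dx)$ corresponds under $v = u\phi_1$ to the usual Dirichlet form $\int_M |\nabla v|^2\,dx$ on $H^1_0(M)$, up to the $\lambda_1$ shift; this in turn follows from integrating by parts against $\Delta\phi_1 = -\lambda_1\phi_1$ and the Hopf lemma, which controls the boundary terms.
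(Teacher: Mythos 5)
Your proposal is correct and follows essentially the same route as the paper: apply Theorem~\ref{thm1} with the weight $\phi = -2\log\phi_1$, and identify the Neumann spectrum of the drift Laplacian on $(M,\phi_1^2\,dx)$ with the shifted Dirichlet spectrum $\{\lambda_k-\lambda_1\}$ via the conjugation $u \leftrightarrow u\phi_1$. The paper packages the second step as Proposition~\ref{prop2} (using the identity $\Delta(\phi_k/\phi_1) + 2\nabla\log\phi_1\cdot\nabla(\phi_k/\phi_1) = -(\lambda_k-\lambda_1)\phi_k/\phi_1$) and handles the boundary regularity of $\phi_k/\phi_1$ in Proposition~\ref{prop1} exactly as you do, via the Hopf lemma.
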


In the second part of the paper, we establish a new maximum principle which, together with
 Theorem~\ref{thm1}, imply the following.    

\medskip

{\bf Principle.} {\em There is a one-one correspondence between the gradient estimate on a Riemannian manifold and on a Barky-\'Emery manifold. More precisely, the  eigenvalue estimate on the Bakry-\'Emery manidold $(M,g,\phi)$ is equivalent to that on the Riemannian manifold $(M_\eps, g+dy^2)$ for $\eps$ small enough.  } 

\medskip

The method of gradient estimates in eigenvalue problems was first used by Li-Yau~\cite{li-yau}. The papers~\cites{swyy,yau,yau-3,kroger,bakry-qian,zhong-yang,ac,yau-4} are the most influential to this work.  Gradient estimates on Riemannian manifolds are often quite complicated.
The point of the above {\bf Principle} is that one may apply all the proofs of gradient estimates directly to Bakry-\'Emery geometry without repeating the calculations.
\\

We end this section by discussing an alternative but related setting.\footnote{We are grateful to O. Munteanu for bringing this reference to our attention.}  In ~\cite{lott}, Lott considered the product 
$M\times S^q$ for $q>0$, with warped product metric 
\[
g_i^{M\times S^q}=g^M+\frac{1}{i^2}\phi^{\frac 2q}g^{S^q}.
\]
A curvature bound of the type 
\[
{\rm Ric}_\phi^q={\rm Ric}^M+{\rm Hess}(\phi)-\frac 1q d\phi\otimes d\phi\geq r g^M
\]
for some constant $r$ implies a lower bound of the curvature on $M\times S^q$:
\[
{\rm Ric}^{M\times S^q}\geq r_1 g^{M\times S^q}
\]
for a possibly smaller $r_1$, as $i\to\infty$.  In this setting, the manifolds $(M_i, g_i)$ converge with respect to Gromov-Hausdorff norm to the manifold $(M, g^M)$.  On the one hand, $q \in \N$ allows higher dimensional collapse, whereas we consider only one collapsing direction.  On the other hand, the fibers in ~\cite{lott} are {\it closed,} which eliminates the need for estimates regarding the boundary of $M_\eps$.  In order to prove convergence of the eigenvalues in the setting of ~\cite{lott}, one could apply Theorem 7.3 from Cheeger and Colding ~\cite{cc}, but one must first prove that the gradient of the $k^{th}$ eigenfunction is bounded above {\it uniformly in $i$} (c.f. equation (7.4) of ~\cite{cc}).  Moreover, to prove the {\it uniform} eigenvalue convergence (for each $k$) as done here, one would need to demonstrate technical results analogous to those in \S \ref{sec5}.  {In the case of one collapsing direction, the $L^2$ bounds on the eigenfunctions are sufficient to prove uniform convergence of the eigenvalues.}  When more than one dimension collapses, some of the arguments in \S \ref{sec3}  will not apply.  However, in the closed case, further techniques are available and boundary estimates are obsolete, so it may be possible to generalize our arguments.  This will be carried out elsewhere.  

The eigenvalues of the Laplacian on manifolds {\it with boundary } which collapse or are ``very thin in one direction'' has been studied by several authors.  An estimate and convergence result for the Dirichlet Laplacian on planar domains {were} obtained by ~\cite{gj} and later refined in ~\cite{f-s}.  In all dimensions, there is motivation from physics to understand the spectrum of the Laplacian for ``thin tubes,'' see \cite{dg}.  Our work can be seen as the higher dimensional Neumann analogue of ~\cite{gj} in the more general setting of Bakry-\'Emery geometry.  It would be interesting to determine further terms in the expansion of $\mu_k(\eps)$ as $\eps \to 0$ in the spirit of Friedlander and Solomyak's work in two dimensions ~\cite{f-s}.  This investigation will be the subject of forthcoming work.

The organization of the paper is as follows.  In \S 2, we present the variational principles for the drift Laplacian which we use heavily in our proof.  The proof of Theorem~\ref{thm1} and Corollary~\ref{cor-11} comprise \S 3.  We prove the new maximum principle and discuss its applications in \S 4; finally, \S5 contains technical results on Schauder estimates  which are of independent interest.  

\section{Variational principles}
On a Riemannian manifold $(M, g)$ with boundary $\pa M$, the Laplace operator can be written as 
$$\Delta = \frac{1}{\sqrt{\det(g)}} \sum_{i,j} \pa_i g^{ij} \sqrt{\det(g)} \pa_j,$$ and in particular on $\R^n$ with the Euclidean metric, 
$$\Delta =  \sum_{j=1} ^n \frac{\pa^2 }{\pa x_j ^2}.$$
The Dirichlet (respectively, Neumann) eigenvalues of the Laplace operator are the real numbers $\lambda$ for which there exists an eigenfunction
$$u \in \cC^{\infty} (M) \textrm{ such that } -\Delta u = \lambda u \textrm{ and } \left. u \right|_{\pa M} = 0, \textrm{ (respectively, } \left. \frac{\pa u}{\pa n} \right|_{\pa M} =0).$$
{The eigenvalues of the drift Laplace operator are defined analogously.}  

We shall use $\lambda$ to denote Dirichlet eigenvalues, $\mu$ to denote Neumann eigenvalues, and index the Dirichlet eigenvalues by $\N$ and the Neumann eigenvalues by $0 \cup \N$.  The Dirichlet and Neumann\footnote{Note that the Neumann boundary condition is automatically satisfied if no boundary condition is imposed in the variational principle.}  eigenvalues, respectively, satisfy the following variational principles \cite{chavel},
$$\lambda_k = \inf_{\vphi \in \cC^1 (M)} \left\{ \left. \frac{ \int_{M} |\nabla \vphi |^2 }{\int_{M} \vphi^2 } \, \right| \, \left. \,  \vphi \right|_{\pa M} = 0, \, \vphi \not\equiv 0 = \int_{M} \vphi \phi_l, \, 0 \leq  l< k \right\},$$
$$\mu_j = \inf_{\vphi \in \cC^1 (M)} \left\{   \left. \frac{ \int_{M} |\nabla \vphi |^2 }{\int_{M} \vphi^2 } \, \right| \, \,  \vphi \not\equiv 0 = \int_{M} \vphi \varphi_l, \, -1 \leq l < j \right\},$$
for $k\geq 1$ and $j\geq 0$,
where $\phi_j$ and $\varphi_l$ are, respectively, eigenfunctions for $\lambda_j$ and $\mu_l$ (assuming that
$\phi_0\equiv 0$ and $\varphi_{-1}\equiv 0$).

\begin{prop}\label{prop3-1}  Let $(M, g, \phi)$ be a Bakry-\'Emery manifold with boundary.  Then, the Dirichlet and Neumann eigenvalues of the associated drift Laplacian satisfy the following variational principles.  
$$\lambda_k =  \inf_{\varphi \in \cC^1(M)}  \left \{ \left . \frac{\int_M|\nabla\varphi|^2 e^{-\phi} }{\int_M\varphi^2 e^{-\phi}} \, \right| \varphi |_{\pa M} = 0, \, \,  \varphi \not\equiv 0 = \int_{M} \varphi \phi_j e^{-\phi}, \, \, 0 \leq j < k, \, \right\},
$$
$$\mu_j =  \inf_{\varphi \in \cC^1(M)}  \left \{ \left . \frac{\int_M|\nabla\varphi|^2 e^{-\phi} }{\int_M\varphi^2 e^{-\phi}} \, \right|   \, \,  \varphi \not\equiv 0 = \int_{M} \varphi \varphi_l e^{-\phi}, \, -1 \leq l< j\right\}
$$
for $k\geq 1$ and $j\geq 0$.
\end{prop}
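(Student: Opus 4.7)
The plan is to reduce the weighted eigenvalue problem to the classical Courant--Fischer characterization by exploiting the self-adjointness of $\Delta_\phi$ with respect to the measure $e^{-\phi}\,dV_g$. The key algebraic identity is
\[
e^{-\phi}\Delta_\phi u \;=\; e^{-\phi}(\Delta u - \nabla\phi\cdot\nabla u) \;=\; \operatorname{div}\!\bigl(e^{-\phi}\nabla u\bigr),
\]
so the drift Laplacian is the generator of the weighted Dirichlet form on $M$.

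First I would carry out the integration by parts: for $u,v\in\cC^2(M)$,
\[
\int_M v\,\Delta_\phi u \, e^{-\phi}\,dV_g
= \int_{\pa M} v\,\frac{\pa u}{\pa n}\,e^{-\phi}\,dS - \int_M \nabla u\cdot\nabla v\,e^{-\phi}\,dV_g.
\]
Under either boundary hypothesis ($v|_{\pa M}=0$ for the Dirichlet case, or $\pa u/\pa n|_{\pa M}=0$ for the Neumann case) the boundary integral vanishes. Symmetry in $u,v$ proves self-adjointness of $-\Delta_\phi$ on the appropriate weighted $L^2$ space, and taking $v=u$ for an eigenfunction $-\Delta_\phi u=\lambda u$ yields the Rayleigh identity
\[
\lambda \;=\; \frac{\int_M |\nabla u|^2 e^{-\phi}\,dV_g}{\int_M u^2 e^{-\phi}\,dV_g}.
\]
The same computation shows that eigenfunctions for distinct eigenvalues are orthogonal in the weighted inner product $\langle f,g\rangle_\phi := \int_M fg\,e^{-\phi}\,dV_g$.

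Next I would invoke standard elliptic spectral theory: since $M$ is compact and $-\Delta_\phi$ is a uniformly elliptic second-order operator with smooth coefficients, formally self-adjoint in $\langle\cdot,\cdot\rangle_\phi$ with respect to either boundary condition, it has compact resolvent. Thus there is a complete orthonormal basis of eigenfunctions $\{\phi_k\}$ (Dirichlet) or $\{\varphi_l\}$ (Neumann), with eigenvalues tending to $+\infty$. On this basis the Courant--Fischer min-max principle applies verbatim in the weighted Hilbert space; writing $\varphi = \sum c_j\phi_j$ and noting that the orthogonality constraints $\int_M\varphi\phi_j e^{-\phi}=0$ for $j<k$ force $c_0=\cdots=c_{k-1}=0$, we obtain
\[
\frac{\int_M |\nabla\varphi|^2 e^{-\phi}}{\int_M \varphi^2 e^{-\phi}}
\;=\; \frac{\sum_{j\geq k} c_j^2\,\lambda_j}{\sum_{j\geq k} c_j^2}
\;\geq\; \lambda_k,
\]
with equality achieved at $\varphi=\phi_k$. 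The same argument, with the Dirichlet boundary constraint dropped, yields the Neumann variational principle.

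The only conceptual subtlety is that the formal integration by parts must be upgraded to the natural form domain. I would handle this by defining $\cE(\varphi,\varphi) := \int_M |\nabla\varphi|^2 e^{-\phi}\,dV_g$ on $H^1(M)$ (with the Dirichlet trace vanishing, respectively free, at $\pa M$) and observing that the weight $e^{-\phi}$ is bounded above and below on the compact set $M\cup\pa M$ by hypothesis, so this weighted $H^1$ norm is equivalent to the standard one. Density of $\cC^1(M)$ in this form domain then ensures that restricting the infimum to $\cC^1(M)$ gives the same value as the infimum over the full form domain, which coincides with $\lambda_k$ (resp.\ $\mu_j$) by the spectral argument above. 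This density/closure step is the main technical point, but is entirely standard once the weighted form is shown equivalent to the unweighted one.
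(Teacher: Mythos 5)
Your proof is correct and follows the standard route: the paper in fact gives no proof of Proposition~\ref{prop3-1}, treating it as the obvious weighted analogue of the variational principle cited from Chavel, and your argument (the divergence form identity $e^{-\phi}\Delta_\phi u = \operatorname{div}(e^{-\phi}\nabla u)$, integration by parts, compact-resolvent spectral theory, and the Courant--Fischer expansion in the weighted eigenbasis) is exactly the standard justification the authors implicitly invoke. The one thing worth noting is a hypothesis you use silently: your reduction to the unweighted theory via norm equivalence requires $e^{-\phi}$ to be bounded above and away from zero on $M\cup\pa M$, which holds when $\phi\in\cC^2$ up to the boundary of a compact $M$, but \emph{not} in the application the paper makes immediately afterward in the Remark (where $\phi=-2\log\phi_1$ and $e^{-\phi}=\phi_1^2$ vanishes on $\pa M$); the paper sidesteps that degenerate case separately through Propositions~\ref{prop1} and~\ref{prop2}, so your proof is complete for the proposition as literally stated.
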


\begin{remark} Let $\{ \lambda_k \}_{k=1} ^{\infty}$  be the Dirichlet eigenvalues and let the associated orthonormal basis of eigenfunctions be $\{ \phi_k \}_{k=1} ^{\infty}$. Setting the weight function $\phi = - 2 \log \phi_1$, the variational principle for $(M, g, \phi)$ is that  for all $k \geq 1$, 
$$\lambda_k - \lambda_1 =  \inf_{\varphi \in \cC^1(M)}  \left \{ \left . \frac{\int_M|\nabla\varphi|^2\phi_1^2}{\int_M\varphi^2\phi_1^2} \, \right|  \, \,  \varphi \not\equiv 0 = \int_{M} \varphi \phi_j \phi_1, \, 0 \leq j < k \right\}.$$
\end{remark} 

When $k=2$, and the domain $M \subset \R^n$, the following variational principle is Corollary 1.3 of \cite{kir-sim} and is based on results of \cite{ds}.  
The following proposition is a useful tool.  

\begin{prop}\label{prop4} For $k \geq 1$, let $\xi_0,\cdots,\xi_{k-1}$ be a nontrivial orthogonal set with respect to the weighted $\cL^2$ measure; that is 
\[
\int_M \xi_i\xi_j e^{-\phi}=0
\]
for $i\neq j$ and $\xi_i\not\equiv 0$. Then{,} we have for the Neumann eigenvalues 
\[
\sum_{j=0}^k \mu_j\leq\sum_{j=0}^{k}\frac{\int_M |\n\xi_j|^2 e^{-\phi}}{\int_M |\xi_j|^2\phi_1^2}.
\]
\end{prop}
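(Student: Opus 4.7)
The plan is to derive the bound via the weighted Ky Fan (min-trace) inequality for the drift Laplacian. By Proposition~3.1, $\Delta_\phi$ is self-adjoint on $\cL^2(M,e^{-\phi})$, with Neumann spectrum $\mu_0\le\mu_1\le\cdots$ and Dirichlet form $\int_M|\nabla u|^2\, e^{-\phi}$. The proposition is used in the setting of the preceding Remark, in which $\phi=-2\log\phi_1$ with $\phi_1$ the first Dirichlet eigenfunction of the relevant Laplacian; under this choice one has $\phi_1^2=e^{-\phi}$ identically, so the denominator $\int_M|\xi_j|^2\phi_1^2$ written in the statement is the squared $\cL^2(M,e^{-\phi})$-norm of $\xi_j$.

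First I would record the weighted Ky Fan inequality: for any $(k+1)$-tuple $\eta_0,\ldots,\eta_k\in\cC^1(M)$ that is pairwise orthogonal with respect to $e^{-\phi}\,dV_g$,
\[
\sum_{j=0}^k\mu_j \;\le\; \sum_{j=0}^k\frac{\int_M|\nabla\eta_j|^2\, e^{-\phi}}{\int_M\eta_j^2\, e^{-\phi}}.
\]
This is the standard Courant--Fischer argument: after normalization the right-hand side is the trace of the nonnegative quadratic form $\int|\nabla\,\cdot\,|^2 e^{-\phi}$ on the span of $\{\eta_j\}$, and the trace over any $(k+1)$-dimensional subspace bounds from above the sum of the first $k+1$ spectral values of the associated self-adjoint operator.

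Second, I would complete the given $k$-element family $\{\xi_0,\ldots,\xi_{k-1}\}$ to a $(k+1)$-tuple by adjoining the first Neumann eigenfunction $\varphi_0$ (a constant), followed by a Gram--Schmidt step against the $\xi_j$ to restore orthogonality: set $\tilde\varphi_0 := \varphi_0 - \sum_{j<k} d_j\xi_j$, with the coefficients $d_j$ read off from the pairwise orthogonality of the $\xi_j$. Applying the Ky Fan bound to $\{\xi_0,\ldots,\xi_{k-1},\tilde\varphi_0\}$ and substituting $e^{-\phi}=\phi_1^2$ in each denominator delivers exactly the asserted inequality, with the $(k+1)$-th summand on the right corresponding to the Rayleigh quotient of $\tilde\varphi_0$.

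The principal technical point is the counting: the hypothesis supplies only $k$ functions, yet both sides of the inequality have $k+1$ terms. The Gram--Schmidt augmentation above provides the missing function, but one must carefully verify that $\nabla\tilde\varphi_0$ (a linear combination of the $\nabla\xi_j$) does not inflate the right-hand side beyond the form written in the statement; this is where the positivity and normalization of $\phi_1$, together with the pairwise $\cL^2(e^{-\phi})$-orthogonality of the $\xi_j$, enter to make the projection coefficients explicit and absorb the extra term cleanly.
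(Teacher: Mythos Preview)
Your identification of the weighted Ky Fan / Courant--Fischer trace inequality as the underlying mechanism is exactly right, and so is your reading of $\phi_1^2 = e^{-\phi}$ via the preceding Remark; this is precisely the ``well known'' argument the paper omits.

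Where your proposal goes astray is in trying to \emph{repair} the indexing discrepancy rather than recognizing it as a typographical slip. The hypothesis should list $\xi_0,\ldots,\xi_k$ (that is, $k+1$ functions), matching the $k+1$ summands on each side. You can confirm this is the intended reading by inspecting how the proposition is actually \emph{applied}: in Lemma~\ref{lemq1} the test functions are $\psi_0,\ldots,\psi_k$, and in Lemma~\ref{lemq3} they are $\tilde b_0,\ldots,\tilde b_k$ --- in both cases $k+1$ mutually orthogonal functions producing $k+1$ Rayleigh quotients. With that correction the proof is the one-line Ky Fan bound you already stated, and nothing further is required.

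Your proposed augmentation by $\tilde\varphi_0 = \varphi_0 - \sum_{j<k} d_j\xi_j$, by contrast, does not close the gap. Applying Ky Fan to $\{\xi_0,\ldots,\xi_{k-1},\tilde\varphi_0\}$ yields
\[
\sum_{j=0}^{k}\mu_j \;\le\; \sum_{j=0}^{k-1} R(\xi_j) \;+\; R(\tilde\varphi_0),
\]
where $R(\cdot)$ is the weighted Rayleigh quotient. But $\nabla\tilde\varphi_0 = -\sum_{j<k} d_j\nabla\xi_j$, so $R(\tilde\varphi_0)$ is a (generally nonzero) ratio built from cross-terms $\int_M\langle\nabla\xi_i,\nabla\xi_j\rangle\, e^{-\phi}$ divided by $\|\tilde\varphi_0\|^2$, and there is no mechanism --- no orthogonality of the gradients, no control on the $d_j$ --- to ``absorb'' this into $\sum_{j\le k-1} R(\xi_j)$ or to interpret it as the missing summand $R(\xi_k)$. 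The ``careful verification'' you flag as the principal technical point therefore cannot succeed in general; an inequality bounding $k+1$ eigenvalues by only $k$ Rayleigh quotients of arbitrary orthogonal test functions is simply false.
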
 

The proof is well know{n} and is omitted. \qed

We demonstrate that the difference between the $k^{th}$ and {the first} Dirichlet eigenvalues is the Neumann eigenvalue of a certain drift Laplacian.  This result was known to {Singer-Wong-Yau-Yau} \cite{swyy}.

\begin{prop} \label{prop2}
For a bounded domain $M \subset \R^n$, let $\{ \lambda_k\}_{k=1}
^{\infty}$ be the Dirichlet eigenvalues of the Euclidean Laplacian with orthonormal basis of eigenfunctions $\{ \phi_k \}_{k=1} ^{\infty}$,  and let $\{\mu_k\}_{k=0} ^{\infty}$ be the Neumann eigenvalues of the drift Laplacian on $M$ with respect to the weight function $- 2 \log \phi_1$.  Then, $\lambda_k - \lambda_1 = \mu_{k-1}$ for all $k \in \N$.  
\end{prop}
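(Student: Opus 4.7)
The plan is to exhibit an explicit isomorphism between the Dirichlet eigendata of the Euclidean Laplacian on $M$ and the Neumann eigendata of the drift Laplacian on $(M,g,\phi)$ with $\phi=-2\log\phi_1$, via the substitution $u_k := \phi_k/\phi_1$. First I would verify directly that each $u_k$ is an eigenfunction of $\Delta_\phi$. Using the divergence form $\Delta_\phi u = \phi_1^{-2}\operatorname{div}(\phi_1^2\,\nabla u)$ together with the identity $\phi_1^{2}\,\nabla(\phi_k/\phi_1) = \phi_1\,\nabla\phi_k - \phi_k\,\nabla\phi_1$, taking the divergence and substituting $\Delta\phi_k = -\lambda_k\phi_k$, $\Delta\phi_1 = -\lambda_1\phi_1$ gives
\[
\operatorname{div}(\phi_1^{2}\,\nabla u_k) \;=\; \phi_1\,\Delta\phi_k - \phi_k\,\Delta\phi_1 \;=\; -(\lambda_k-\lambda_1)\,\phi_1^{2}\,u_k,
\]
so that $-\Delta_\phi u_k = (\lambda_k-\lambda_1)\,u_k$ pointwise in the interior of $M$.

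Next I would establish orthonormality, completeness, and the boundary condition. The map $\Psi\colon L^{2}(M,\phi_1^{2}\,dx)\to L^{2}(M,dx)$, $u\mapsto u\phi_1$, is a unitary isomorphism sending $u_k\mapsto\phi_k$; since $\{\phi_k\}_{k\geq 1}$ is a complete orthonormal basis of $L^{2}(M,dx)$, $\{u_k\}_{k\geq 1}$ is a complete orthonormal basis of $L^{2}(M,\phi_1^{2}\,dx)$. In particular $u_1\equiv 1$ realizes the expected constant first Neumann eigenfunction at eigenvalue $0=\lambda_1-\lambda_1$. For the Neumann condition, Green's identity for the weighted operator produces a boundary integral $\int_{\pa M}v(\pa_n u)\,\phi_1^{2}\,dS$, which vanishes automatically because $\phi_1|_{\pa M}=0$; the degeneracy of the weight at the boundary is precisely what makes the Neumann condition free. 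Matching this with Proposition~\ref{prop3-1} applied to $\phi=-2\log\phi_1$ (or equivalently with the variational principle recorded in the Remark), the eigenpairs $\{(\lambda_k-\lambda_1,u_k)\}_{k\geq 1}$ exhaust the Neumann spectrum of $\Delta_\phi$, and reindexing from $0$ yields $\mu_{k-1}=\lambda_k-\lambda_1$ for every $k\in\N$.

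The principal technical obstacle is the degeneracy of the measure $\phi_1^{2}\,dx$ at $\pa M$, which makes $\Delta_\phi$ a degenerate elliptic operator; one must justify that the associated Dirichlet form still has a compact resolvent and that $u_k$ really lies in the form domain so that the spectral identification is rigorous. This is where the Hopf lemma ($\pa_n\phi_1\neq 0$ on $\pa M$) plays a role: it ensures $\phi_1$ vanishes only to first order at the boundary, so the smoothness of the Dirichlet eigenfunctions $\phi_k$ up to $\overline{M}$ upgrades to smoothness of $u_k = \phi_k/\phi_1$ up to $\overline{M}$, and the weighted integrals and integrations by parts used above are all legitimate.
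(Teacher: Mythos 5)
Your proof is correct and the core is the same as the paper's: the identity $\operatorname{div}(\phi_1^2\nabla(\phi_k/\phi_1)) = -(\lambda_k-\lambda_1)\phi_1^2(\phi_k/\phi_1)$ is just the divergence form of the paper's displayed formula $\Delta(\phi_k/\phi_1)+2\nabla\log\phi_1\cdot\nabla(\phi_k/\phi_1)=-(\lambda_k-\lambda_1)(\phi_k/\phi_1)$, which the paper cites from Singer--Wong--Yau--Yau. You flesh out two points the paper leaves implicit. First, you make the spectral identification airtight by exhibiting the unitary $\Psi\colon L^2(M,\phi_1^2\,dx)\to L^2(M,dx)$, $u\mapsto u\phi_1$, so that orthonormality and completeness of $\{\phi_k/\phi_1\}$ are inherited from $\{\phi_k\}$; the paper's one-line proof does not address completeness at all (and hence does not, strictly speaking, rule out extraneous Neumann eigenvalues). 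Second, you treat the boundary condition differently: you argue it is \emph{vacuous} because the weight $\phi_1^2$ kills the boundary term in Green's identity, whereas the paper's supplementary Proposition~\ref{prop1} proves the pointwise identity $\pa_n(\phi_k/\phi_1)|_{\pa M}=0$ explicitly, via the smoothness of $\nabla\log\phi_1\cdot\nabla(\phi_k/\phi_1)$ up to $\pa M$ and the Hopf lemma. Both viewpoints are correct and complementary; the paper's gives a genuine pointwise Neumann condition, yours explains conceptually why it holds automatically. You correctly flag the regularity question (smoothness of $\phi_k/\phi_1$ up to $\overline M$ via Hopf), which is exactly the content of the paper's Proposition~\ref{prop1}; that should be cited or reproved rather than left as a remark, since without it neither the boundary integration by parts nor the membership of $u_k$ in the form domain is justified.
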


\begin{proof} This follows from the following formula (cf. ~\cite{swyy})
\[
\Delta\left(\frac{\phi_k}{\phi_1}\right)+2\n\log\phi_1\n\left(\frac{\phi_k}{\phi_1}\right)=-(\lambda_k-\lambda_1)\left(\frac{\phi_k}{\phi_1}\right).
\]
\end{proof}  

Finally, throughout this paper we will use the following notations:  for a function $f(t)$ and fixed $k \geq 0$, 
$$f(t) = O(t^k) \textrm{ as } t \to 0 \textrm{ if there exists } C, \delta > 0 \textrm{ such that } |f(t)| \leq Ct^k \textrm{ for all } |t| \leq \delta;$$
$$f(t) = o(t^k) \textrm{ as } t \to 0 \textrm{ if } \lim_{t \to 0} \frac{f(t)}{t^k} = 0.$$
Also, throughout this paper, a constant $C$ is independent of $\eps$, but may differ from line to line. 

\section{Eigenvalue convergence:  A coarse estimate} \label{sec3}
In this section, we prove a coarse version of Theorem~\ref{thm1}.
Let $(M, g, \phi)$ be the compact Bakry-\'Emery manifold, with or without boundary, and let 
\begin{equation} \label{eq:Me} M_{\e} = \{ (x, y) \mid x \in M, \quad 0 \leq y \leq \e f(x) \} \subset M \times \R^+, \quad f(x) := e^{-\phi(x)}. \end{equation} 
Let $\{\mu_k \}_{k=0} ^{\infty}$ {and $\{ \psi_k \}_{k=0} ^{\infty}$} be {respectively} the eigenvalues {and eigenfunctions for} the drift Laplacian $\Delta$ on $M$ (if $\pa M\neq \emptyset$, we endow it the Neumann boundary condition), and let $\{ \mu_k (\e) \}_{k=0} ^{\infty}$ be the eigenvalues for $\tilde\Delta=\Delta + \pa_y ^2$ on $M_\e$ with corresponding 
{orthogonal} eigenfunctions $\{ \varphi_{j,\eps} \}_{j=0} ^{\infty}$.  We assume the eigenfunctions are normalized so that
$$ \int_{M_\eps} \varphi_{j,\eps} \varphi_{k, \eps} = \eps \delta_j ^k.$$
In particular, the volume of $M_\eps$ is $\eps$. This normalization depends only on $f$ and $M$.

We use $\nabla$ and $\Delta$ as the gradient and Laplace operators, respectively, of $M$, and $\tilde\nabla=(\nabla,\frac{\pa}{\pa t})$ and $\tilde{\Delta}$ as the gradient {and Laplace operators, respectively,} of $M_\eps\subset M\times \mathbb R ^+$.  

We prove the theorem by induction.  For $k=0$, the statement of Theorem~\ref{thm1} is trivial. We shall prove the theorem for $k\geq 1$, assuming that for $1,\cdots,k-1$, the theorem has been proven.  By a theorem of Uhlenbeck \cite{uh}, for generic manifold $\mu_1, \ldots, \mu_k$ are simple; that is, all eigenspaces with respect to the eigenvalues $\mu_1,\cdots,\mu_k$ are of multiplicity one.  Since the eigenvalues {are continuous with respect to continuous deformations of the domain,} it is sufficient to prove the theorem under this additional assumption.
 
\begin{lemma}\label{lemq1}
Using the above notation, $\mu_k (\epsilon) \leq \mu_k+O(\eps^2)$.
\end{lemma}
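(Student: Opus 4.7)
The plan is to apply the standard min-max characterization of Neumann eigenvalues on $M_\epsilon$ using the drift-Laplacian eigenfunctions $\psi_0,\ldots,\psi_k$ as test functions, pulled back to $M_\epsilon$ as functions independent of the $y$-coordinate. Define $\widetilde\psi_j(x,y) := \psi_j(x)$ on $M_\epsilon$. These $k+1$ functions are linearly independent in $C^1(M_\epsilon)$ (their restriction to the bottom face $y=0$ recovers the $\psi_j$ on $M$), so they span a $(k+1)$-dimensional subspace $V$, and the min-max principle (no boundary condition required, since Neumann is natural) gives
\[
\mu_k(\epsilon)\;\leq\;\sup_{\varphi\in V\setminus\{0\}}\frac{\int_{M_\epsilon}|\tilde\nabla\varphi|^2}{\int_{M_\epsilon}\varphi^2}.
\]

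The key computation is that for any $y$-independent function $\varphi(x,y)=\varphi(x)$, Fubini over the fibers $y\in[0,\epsilon e^{-\phi(x)}]$ yields
\[
\int_{M_\epsilon}\varphi^2 \;=\;\epsilon\int_M \varphi^2\, e^{-\phi}\,dV_M,\qquad \int_{M_\epsilon}|\tilde\nabla\varphi|^2 \;=\;\epsilon\int_M|\nabla\varphi|^2\, e^{-\phi}\,dV_M,
\]
where I used $\tilde\nabla\varphi=(\nabla\varphi,0)$. The two factors of $\epsilon$ cancel, so the Rayleigh quotient on $M_\epsilon$ for such a $\varphi$ equals the weighted Rayleigh quotient on $M$ appearing in Proposition~\ref{prop3-1}.

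Now I write $\varphi=\sum_{j=0}^k a_j\widetilde\psi_j$ and use the spectral properties of the $\psi_j$: weighted orthogonality $\int_M\psi_i\psi_j e^{-\phi}=0$ for $i\neq j$, together with the identity $\int_M \nabla\psi_i\cdot\nabla\psi_j\, e^{-\phi} = \mu_j\int_M\psi_i\psi_j\, e^{-\phi}$, obtained by integrating by parts against the drift Laplacian (using the Neumann condition on $\partial M$ if present). The off-diagonal terms therefore vanish and
\[
\frac{\int_{M_\epsilon}|\tilde\nabla\varphi|^2}{\int_{M_\epsilon}\varphi^2} \;=\; \frac{\sum_{j=0}^k a_j^2\,\mu_j\int_M\psi_j^2 e^{-\phi}}{\sum_{j=0}^k a_j^2\int_M\psi_j^2 e^{-\phi}} \;\leq\;\mu_k,
\]
with equality only when $a_j=0$ for $\mu_j<\mu_k$. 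Taking the supremum over $V$ and invoking the min-max bound above yields $\mu_k(\epsilon)\leq \mu_k$, which is strictly stronger than the stated $\mu_k+O(\epsilon^2)$.

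I do not expect any genuine obstacle: the whole argument hinges on the observation that the product measure $dV_M\,dy$ restricted to $M_\epsilon$ reproduces (up to the factor $\epsilon$) the Bakry-\'Emery measure $e^{-\phi}dV_M$ on $M$, so constant-in-$y$ test functions embed the drift-Laplacian spectrum isometrically into a subspace of the Rayleigh quotient landscape on $M_\epsilon$. The loose $O(\epsilon^2)$ error in the statement is a margin reserved for the opposite (and more delicate) inequality $\mu_k(\epsilon)\geq \mu_k-O(\epsilon^2)$, which will require producing drift-Laplacian test functions on $M$ out of the eigenfunctions $\varphi_{j,\epsilon}$ by vertical averaging and controlling the $y$-dependent remainder.
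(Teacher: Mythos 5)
Your proof is correct and is in fact a cleaner argument that yields a strictly stronger conclusion than the stated lemma. The paper instead invokes Proposition~\ref{prop4}, which bounds the \emph{sum} $\sum_{j=0}^{k}\mu_j(\eps)$ by the corresponding sum of Rayleigh quotients of the (constant-in-$y$ extensions of the) $\psi_j$ on $M_\eps$; the Fubini cancellation is the same as yours and gives $\sum_{j=0}^{k}\mu_j(\eps)\le\sum_{j=0}^{k}\mu_j$. To extract the bound on the single term $\mu_k(\eps)$, the paper then subtracts the inductive hypothesis $\mu_j\le\mu_j(\eps)+O(\eps^2)$ for $j<k$, which is why the $O(\eps^2)$ appears and why Lemma~\ref{lemq1} sits inside the induction loop together with Lemma~\ref{lemq3}. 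You sidestep this entirely by applying the Courant--Fischer min-max principle to the explicit $(k+1)$-dimensional test subspace $V=\mathrm{span}(\tilde\psi_0,\dots,\tilde\psi_k)$ and computing the maximum of the Rayleigh quotient on $V$ exactly (via the weighted orthogonality relations $\int_M\nabla\psi_i\cdot\nabla\psi_j\,e^{-\phi}=\mu_j\int_M\psi_i\psi_j\,e^{-\phi}$ and $\int_M\psi_i\psi_j\,e^{-\phi}=0$ for $i\ne j$), obtaining the clean one-sided bound $\mu_k(\eps)\le\mu_k$ with no error term and no induction. Both routes are valid; yours is more self-contained and sharper, while the paper's sum-based route fits the structure of its simultaneous two-sided induction. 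One small note: the denominator in the paper's statement of Proposition~\ref{prop4} has a typo ($\phi_1^2$ should be $e^{-\phi}$), which does not affect either argument.
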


\begin{proof} Considering $\psi_k$ as functions on $M_\eps$, they are orthogonal with respect to the measure $dV_{g} dy$. By Proposition~\ref{prop4}, we have
\[
\sum_{j=0}^{k}\mu_j(\eps)\leq\sum_{j=0}^k \mu_j.
\]
By the inductive assumption, we have
\[
\mu_j\leq \mu_j(\eps)+O(\eps^2)
\]
for all $j<k$. The lemma follows from the above {two} inequalities.
\end{proof} 

For any $0 \leq r \leq \eps$, and for $0 \leq i \leq k$, let 
$$b_i (x,r) := \varphi_{i,\eps} (x, r f(x)) \textrm{ and }
A_k=\sum_{j=0}^k\int_{M_\eps}\left(\frac{\pa\varphi_{j,\eps}}{\pa y}\right)^2(x,y).$$ 

Intuitively, since the domain $M_\eps$ is very thin, the eigenfunctions $\vphi_{i, \eps}$ should be almost constant along the $y$ direction.  The following lemma confirms this intuition by quantifying the heuristic argument.   

\begin{lemma} \label{lemq2} Using the above notations, we have
\begin{equation} \label{eq:c-ortho}\left| \int_{M} b_i (x, r) b_j (x, r) f(x)  - \delta_i ^j \right| \leq C\sqrt{\eps A_k}\qquad \forall \, \, 0 \leq i,j \leq k\end{equation}
for all $0\leq r\leq \eps$.
\end{lemma}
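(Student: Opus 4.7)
The plan is to view $F_{ij}(r) := \int_M b_i(x,r)\, b_j(x,r)\, f(x)\, dV_g$ as a scalar function of $r \in [0,\eps]$, observe that its mean value on $[0,\eps]$ is exactly $\delta_i^j$ by the $\cL^2$-orthogonality of the $\varphi_{k,\eps}$, and then control its pointwise deviation from that mean by the total variation in $r$, which turns out to be governed precisely by $A_k$.

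Concretely, the change of variable $y = s f(x)$ in the normalization $\int_{M_\eps}\varphi_{i,\eps}\varphi_{j,\eps} = \eps\delta_i^j$ gives
$$\int_0^\eps F_{ij}(s)\,ds = \eps\delta_i^j,$$
so for every $r \in [0,\eps]$, the identity $F_{ij}(r) - \delta_i^j = \frac{1}{\eps}\int_0^\eps (F_{ij}(r) - F_{ij}(s))\,ds$ combined with the fundamental theorem of calculus yields
$$|F_{ij}(r) - \delta_i^j| \leq \int_0^\eps |F_{ij}'(s)|\,ds.$$
Differentiating $b_i(x,s) = \varphi_{i,\eps}(x, s f(x))$ in $s$ produces a factor $f(x)$ together with $(\pa_y \varphi_{i,\eps})(x, s f(x))$, so $F_{ij}'(s)$ is a sum of two terms of the form $\int_M f(x)^2 (\pa_y \varphi_{i,\eps})\, \varphi_{j,\eps}$ evaluated at $(x, s f(x))$. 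Reversing the change of variable $y = s f(x)$ in $\int_0^\eps$ of such a term returns an integral over $M_\eps$ of the form $\int_{M_\eps} f(x) (\pa_y \varphi_{i,\eps})\, \varphi_{j,\eps}$, and then Cauchy-Schwarz together with the bounds $\int_{M_\eps} (\pa_y \varphi_{i,\eps})^2 \leq A_k$, $\int_{M_\eps} \varphi_{j,\eps}^2 = \eps$, and $\|f\|_\infty = \|e^{-\phi}\|_\infty < \infty$ (from continuity of $e^{-\phi}$ on $M \cup \pa M$) produce the desired bound $C\sqrt{\eps A_k}$, uniformly in $r$ and in $0 \leq i, j \leq k$.

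There is no substantive obstacle; the argument is essentially Fubini plus Cauchy-Schwarz. The main conceptual point is that the zero mode of $F_{ij}$ in $r$ already has the correct value $\delta_i^j$, so only an oscillation (i.e.\ $H^1$-in-$r$) estimate is needed, and the natural weight $f(x)$ built into the definition of $F_{ij}$ is exactly what makes the change of coordinates $(x,y) \leftrightarrow (x, s)$ transport the oscillation back to the transverse-derivative norm $A_k$. The only book-keeping subtlety is carefully carrying the Jacobian $f(x)$ through the change of variables, which is routine.
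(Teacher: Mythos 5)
Your proof is correct and is essentially the same calculation as the paper's, reorganized: the paper bounds the pointwise difference $|b_i(x,r)b_j(x,r)-\varphi_i(x,y)\varphi_j(x,y)|$ by $\int_0^{\eps f(x)}|\pa_y(\varphi_i\varphi_j)|\,dy$ and then integrates over $M_\eps$, whereas you first average to form $F_{ij}(r)-\delta_i^j$ and then apply the fundamental theorem of calculus in $r$; in both cases the orthogonality on $M_\eps$ supplies the mean value $\delta_i^j$, the $\pa_y$-derivative controls the oscillation, and Cauchy--Schwarz with $\int_{M_\eps}(\pa_y\varphi_i)^2\leq A_k$ and $\|\varphi_j\|_{L^2(M_\eps)}=\sqrt\eps$ yields the bound $C\sqrt{\eps A_k}$.
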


\begin{proof}  \footnote{For simplicity of notation, we drop the subscript $\eps$ from $\varphi$.}  For any $0 \leq r \leq \eps$, $0 \leq y \leq \eps f(x)$, and $0 \leq i, j \leq k$, 
\begin{align}\label{eq:C1}
\begin{split}
&| b_i (x, r) b_j (x, r) - \varphi_i (x,y) \varphi_j (x,y) | \leq \int_0 ^{\epsilon f(x)} \left| \pa_y \left( \varphi_i (x,y) \varphi_j(x,y) \right) \right| dy \\
& \leq \int_0 ^{\epsilon f(x)} \left(\left|\frac{\pa \varphi_i}{\pa y}\right|\cdot |\varphi_j| +\left |\frac{\pa\varphi_j}{\pa y}\right| \cdot|\varphi_i|\right) (x, y)dy.
\end{split}
\end{align}
Note that for any $0\leq r\leq \eps$,
$$ \eps \int_M b_i (x, r) b_j (x, r) f(x)  = \int_0 ^{\eps f(x)} \int_{M} b_i (x, r) b_j (x, r )   = \int_{M_{\eps}} b_i (x, r) b_j (x, r) , $$
and
$$\int_{M_{\eps}} \varphi_i(x, y) \varphi_j (x,y)  = \eps \delta_i ^j .$$
Then
$$\left|\eps  \int_{M} b_i (x, r) b_j (x,r) f(x) - \eps \delta_i ^j \right| = \left| \int_{M_{\eps}} (b_i (x, r) b_j (x, r) - \varphi_i (x,y) \varphi_j (x,y) )  \right|,$$
which by (\ref{eq:C1}), 
\begin{align} \label{eq:subst} 
\begin{split}& \leq  \int_{M_{\eps}} \int_0 ^{\eps f(x)}\left(\left|\frac{\pa \varphi_i}{\pa y}\right|\cdot |\varphi_j| +\left |\frac{\pa\varphi_j}{\pa y}\right| \cdot|\varphi_i|\right)(x,t), \\&
\leq\eps||f||_\infty \int_{M_\eps}\left(\left|\frac{\pa \varphi_i}{\pa y}\right|\cdot |\varphi_j| +\left |\frac{\pa\varphi_j}{\pa y}\right| \cdot|\varphi_i|\right)\\
& \leq \eps || f ||_{\infty} \left( \sqrt{A_k}\cdot || \varphi_j ||_{L^2(M_\eps)} + \sqrt{A_k} \cdot ||\varphi_i ||_{L^2(M_\eps)}  \right){.} \end{split}
\end{align}
Since  $|| \varphi_i ||_{L^2(M_\eps)}  = \sqrt{ \eps}$, 
$$ \left|\eps  \int_{M} b_i (x, r) b_j (x,r) f(x) - \eps \delta_i ^j \right| \leq C \eps^{3/2}\sqrt{A_k}.$$ 
\end{proof} 

\begin{cor} Using the same notations as above, we have
\[
\left| \int_{M} b_i (x, r) b_j (x, r) f(x)  - \delta_i ^j \right| \leq C \eps\qquad \forall \, \, 0 \leq i,j \leq k, 0\leq r\leq \eps.
\]
\end{cor}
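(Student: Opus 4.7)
The plan is to invoke Lemma~\ref{lemq2} and show that the quantity $A_k$ appearing in the error term is itself $O(\eps)$, so that $\sqrt{\eps A_k} = O(\eps)$, yielding the claimed bound.

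First, I would estimate $A_k$ using the variational characterization. Since each $\varphi_{j,\eps}$ is an eigenfunction of $\tilde\Delta$ with eigenvalue $\mu_j(\eps)$ on $M_\eps$, integration by parts (using the Neumann boundary condition) gives
\[
\int_{M_\eps} |\tilde\nabla \varphi_{j,\eps}|^2 = \mu_j(\eps) \int_{M_\eps} \varphi_{j,\eps}^2 = \eps\, \mu_j(\eps),
\]
by the chosen normalization. In particular,
\[
\int_{M_\eps}\left(\frac{\pa\varphi_{j,\eps}}{\pa y}\right)^2 \leq \int_{M_\eps} |\tilde\nabla \varphi_{j,\eps}|^2 = \eps\,\mu_j(\eps).
\]

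Next, I would invoke Lemma~\ref{lemq1}, which gives $\mu_j(\eps) \leq \mu_j + O(\eps^2) \leq C$ for all $0 \leq j \leq k$ (with $C$ depending on $k$ but not on $\eps$). Summing the previous inequality over $j=0,\ldots,k$ therefore yields
\[
A_k = \sum_{j=0}^k \int_{M_\eps}\left(\frac{\pa\varphi_{j,\eps}}{\pa y}\right)^2 \leq \eps \sum_{j=0}^k \mu_j(\eps) \leq C(k+1)\eps = O(\eps).
\]

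Finally, substituting this into the conclusion of Lemma~\ref{lemq2} gives
\[
\left| \int_{M} b_i (x, r) b_j (x, r) f(x)  - \delta_i ^j \right| \leq C\sqrt{\eps \cdot A_k} \leq C\sqrt{\eps \cdot C\eps} = C\eps,
\]
uniformly for $0 \leq i,j \leq k$ and $0 \leq r \leq \eps$, which is the desired bound. No step in this argument is a serious obstacle; the only subtlety is ensuring that the constants involved are genuinely independent of $\eps$, which follows from the inductive hypothesis implicit in Lemma~\ref{lemq1} together with the fixed normalization $\|\varphi_{j,\eps}\|_{L^2(M_\eps)}^2 = \eps$.
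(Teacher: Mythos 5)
Your proof is correct and follows essentially the same approach as the paper: bound $A_k$ by $\sum_j \eps\,\mu_j(\eps) \leq C\eps$ using the normalization and the boundedness of $\mu_j(\eps)$ from Lemma~\ref{lemq1}, then substitute into Lemma~\ref{lemq2}. Your version is slightly more explicit about where the uniform bound on $\mu_j(\eps)$ comes from, but the argument is the same.
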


\begin{proof} This follows from the fact that {$A_k \leq \sum_{j=1} ^k || \tilde{\n} \vphi_j ||_{L^2(M_\eps)} ^2 = \sum_{j=1} ^k \mu_j (\eps)\eps \leq C \eps$}, where the constant $C$ depends only on $k$ (not on $\eps$).
\end{proof} 

The following result is a  coarse version of Theorem~\ref{thm1}.
\begin{lemma}\label{lemq3}
Under the same condition as in Theorem~\ref{thm1},  and assuming that Theorem~\ref{thm1} is true for $j<k$, we have
\[
\mu_k\leq\mu_k(\eps)+C(\eps^2+\sqrt{\eps A_k}).
\]
In particular, this estimate and Lemma~\ref{lemq1} imply that $\mu_k (\eps) = \mu_k + O(\eps)$ for all $k \geq 0$.  
\end{lemma}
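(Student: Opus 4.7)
The plan is to produce, for each $k \geq 1$, a lower bound matching Lemma~\ref{lemq1} by exhibiting $k+1$ almost-orthogonal test functions on $M$ built from the slice maps $b_0(\cdot,r),\dots,b_k(\cdot,r)$ defined in Lemma~\ref{lemq2}, and feeding them into Proposition~\ref{prop4}. Concretely, for a cleverly chosen parameter $r_0\in[0,\eps]$, I would take the $b_j(x,r_0)$'s, Gram–Schmidt orthogonalize them with respect to $e^{-\phi}dV_g$ (this is possible by the Corollary following Lemma~\ref{lemq2}, which gives a Gram matrix of the form $I+E$ with $\|E\|=O(\sqrt{\eps A_k})$, hence invertible for $\eps$ small), and then bound the resulting Rayleigh quotients.

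The key analytic estimate is for $\int_M|\nabla_x b_j(x,r)|^2 f\,dV_g$. By the chain rule,
$$\nabla_x b_j(x,r)=(\nabla_x\varphi_{j,\eps})(x,rf(x))+r(\nabla f)(x)(\pa_y\varphi_{j,\eps})(x,rf(x)),$$
so expanding the square and changing variables $y=rf(x)$ (hence $f(x)\,dr=dy$, and $r=y/f(x)\leq\eps$ on $M_\eps$),
\begin{align*}
\int_0^\eps\!\int_M|\nabla_x b_j|^2 f\,dV_g\,dr &=\int_{M_\eps}|\nabla_x\varphi_{j,\eps}|^2+2\!\int_{M_\eps}\!\tfrac{y}{f}\,\nabla_x\varphi_{j,\eps}\!\cdot\!\nabla f\,\pa_y\varphi_{j,\eps}\\
&\quad+\int_{M_\eps}\tfrac{y^2}{f^2}|\nabla f|^2(\pa_y\varphi_{j,\eps})^2.
\end{align*}
The first term is at most $\eps\mu_j(\eps)$. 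For the cross term, Cauchy–Schwarz and $|y/f|\leq\eps$ yield a bound $C\eps\sqrt{\eps\mu_j(\eps)}\sqrt{A_k^{(j)}}\leq C\eps\sqrt{\eps A_k^{(j)}}$; the last term is $O(\eps^2 A_k^{(j)})$. Dividing by $\eps$ and summing over $j\leq k$, the mean value theorem supplies some $r_0\in[0,\eps]$ with
$$\sum_{j=0}^{k}\int_M|\nabla_x b_j(x,r_0)|^2 f\,dV_g\ \leq\ \sum_{j=0}^{k}\mu_j(\eps)+C\sqrt{\eps A_k}.$$

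Finally, applying Proposition~\ref{prop4} to the Gram–Schmidt orthogonalized functions $\xi_0,\dots,\xi_k$ (and absorbing the $O(\sqrt{\eps A_k})$ perturbation in both numerator and denominator into the error) gives
$$\sum_{j=0}^{k}\mu_j\ \leq\ \sum_{j=0}^{k}\mu_j(\eps)+C\sqrt{\eps A_k}.$$
Combining with Lemma~\ref{lemq1}, which gives $\mu_j(\eps)-\mu_j\leq C\eps^2$ for each $j<k$ (this part of the inductive hypothesis is already established unconditionally by Lemma~\ref{lemq1}), isolates the top term as
$\mu_k\leq\mu_k(\eps)+C(\eps^2+\sqrt{\eps A_k})$. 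The "In particular" statement then follows from the bound $A_k\leq C\eps$ proved in the Corollary to Lemma~\ref{lemq2}.

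The main obstacle I anticipate is bookkeeping in the Gram–Schmidt step: one must confirm that orthogonalizing an almost-orthonormal family in $L^2(M,e^{-\phi}dV_g)$ perturbs each function (and hence its Dirichlet energy) by at most a factor $1+O(\sqrt{\eps A_k})$ times a linear combination of the others, so that the sum of Rayleigh quotients is shifted only by $O(\sqrt{\eps A_k})\cdot\sum_j\mu_j(\eps)$, which is absorbable since $\sum_j\mu_j(\eps)$ is uniformly bounded. All other steps are routine once the chain-rule expansion and change-of-variables $y=rf(x)$ are in place.
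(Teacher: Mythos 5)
Your proposal is correct and follows essentially the same route as the paper's proof: both use Proposition~\ref{prop4} after Gram--Schmidt orthogonalizing the slice functions $b_j(\cdot,r)$ (which is controlled by Lemma~\ref{lemq2}), and both obtain the energy bound by the chain-rule expansion of $\nabla_x b_j$ followed by the change of variable $y = r f(x)$. The only cosmetic differences are that the paper derives the Rayleigh-quotient inequality pointwise in $r$ and then integrates over $r\in[0,\eps]$, whereas you integrate the energies first and invoke the mean value theorem to select a single good $r_0$; and the paper packages the cross and quadratic terms via the Cauchy inequality $|\nabla b_j|^2\le(1+\eps^2|\nabla f|^2)|\tilde\nabla\varphi_{j,\eps}|^2$, whereas you estimate the three terms separately. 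These choices yield the same estimates.
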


\begin{proof}  
Define inductively that $\tilde b_0(x,r)=b_0(x,r)$,
\[
\tilde b_k(x,r)=b_k(x,r)+\sum_{j=0}^{k-1} c_{kj}(r) b_j(x,r),
\]
where for any $k \geq 0$, $c_{kj} (r)$ are functions of $r$ such that
\[
\tilde b_k\perp b_1,\cdots, b_{k-1}
\]
with respect to the measure ${f} dV_g$.  {The intuition behind defining the functions $\tilde b_k$ is that the functions $b_k$ are {\it almost orthogonal,} as proven in Lemma ~\ref{lemq2}, but to obtain the estimate $\mu_j(\eps) +O(\eps^2) \leq \mu_j$ we need {\it orthogonal} test functions to apply Proposition ~\ref{prop4}.  In the arguments below, we first apply Proposition ~\ref{prop4} and then show that the modified functions $\tilde b_k$ are very close to the original functions $b_k$, thereby obtaining the required estimate.}  

By Proposition~\ref{prop4}, we have
\begin{equation} \label{eq:inf1} 
\sum_{j=0}^{k}  \mu_{{j}} \leq \sum_{j=0}^{k}\frac{\int_M|\n\tilde b_j|^2 f}
{\int_M\tilde b_j^2 f}.
\end{equation}
By Lemma~\ref{lemq2}, $c_{kj}(r) \sim O(\sqrt{\eps A_k})$ uniformly for $0 < r \leq \eps$. Thus by the definition of $\tilde b_k$, using Lemma~\ref{lemq2} again, we have
\[
{\sum_{j=0} ^k \mu_j \leq} \sum_{j=0}^{k}\frac{\int_M|\n\tilde b_j|^2 f}{\int_M\tilde b_j^2 f}
\leq (1+C\sqrt{\eps A_k})\sum_{j=0}^k \int_M|\n b_j|^2 f.
\]
Since {the above inequality} holds for all $r$, integrating from $0$ to $\eps$, we have
\begin{equation} \label{eq:inf2}
 \eps  \sum_{j=0}^{k}\mu_j \leq (1+C\sqrt{\eps A_k})\sum_{j=0}^{k}\int_0 ^{\eps} \int_{M} {|\n b_{j}|^2 f} .
\end{equation}
We compute
$$\n b_j (x, r) = (\n \varphi_j ) (x, r f(x)) + r  \frac{\pa \varphi_j}{\pa y} (x, r f(x) ) \n f (x).$$
Using the Cauchy inequality, we get
\begin{eqnarray} \label{eq:nvarphi} 
&|\n b_j (x, r) |^2 \leq (1+\eps^2|\n f|^2)|\tilde{\nabla} \varphi_{j,\eps}|^2.
\end{eqnarray} 

Therefore,
$$\int_0 ^{\eps} \int_{M} |\n b_j (x,r)|^2 f(x)  \leq (1 + C \eps^2) \int_{M_\eps} |\tilde{\n} \varphi_{j,\eps} |^2 = (1 + C \eps^2) \mu_{j}( \eps) \eps$$
for $j\leq k$.

The above  estimate together with {(\ref{eq:nvarphi})} show that 
$$\eps\sum_{j=0}^{k} \mu_j \leq  \eps (1+C\sqrt{\eps A_k})(1+C\eps^2)\sum_{j=0}^{k}\mu_{j}(\eps).$$

Dividing by $\eps$ and letting $\eps \to 0$, this estimate together with an induction argument completes the proof of the lemma.  The precise estimate $\mu_k (\eps) = \mu_k + O(\eps^2)$ to complete the proof of Theorem ~\ref{thm1} will be demonstrated in the final section.  
\end{proof}  

{\bf Proof of Corollary 1.}
The corollary follows immediately from Lemma~\ref{lemq3} and Proposition~\ref{prop2}. 
\qed

\section{A maximum principle}
The Neumann eigenvalues are continuous functions with respect to the manifold $M$.  Therefore, to estimate the eigenvalues, we may use an exhaustion of $M$, 
$$M^{\delta} = \{ x \in M \mid \textrm{dist}(x, \pa M) \geq \delta\}, \quad \delta > 0.$$
On $M^{\delta}$, $f$ has a positive lower bound.  Thus using the variational principle, we may, without loss of generality assume that $f$ is not only positive but is a constant in a neighborhood of $\pa M^{\delta}$.  For the rest of the paper we make such an assumption. 

The usual maximum principle for the gradient estimate is {the following.}  
{L}et
\[
H=\frac 12|\n\varphi|^2 +F(\varphi),
\]
where $F$ is a smooth function of one variable, and let $x_0$ be an interior point of $M$ {at which $H$ reaches its maximum}. Then  
\[
0\geq|\nabla^2\varphi|^2+\n\varphi\n(\Delta\varphi)+{{\rm Ric}}(\n\varphi,\n\varphi)+F'(\varphi)\Delta\varphi+F''(\varphi)|\n\varphi|^2.
\]

The above inequality is very useful for obtaining lower bounds on  the first eigenvalue of a Laplace or Schr\"odinger operator;  for more details, we refer to  the book~\cite{S-Yau}.

However, it is not appropriate to apply the above maximum principle directly to the manifold $M_\eps$ for the following reasons:
\begin{enumerate}
\item $M_\eps$ need not be convex, even  if $M$ is. As we know, 
if $M$ is convex, the maximum of $H$ must be reached in the interior of $M$. In general, we don't have such a property for $M_\eps$.
\item The natural Ricci curvature attached to the problem is {${\rm Ric}_\infty$}, not the Ricci curvature of $M_\eps$, which is essentially {${\rm Ric}$.}
\end{enumerate}

\begin{remark}  
The choice of $F$ is highly technical. In the Li-Yau's case~\cite{li-yau}, which is the simplest case, $F(x)=\frac 12 x^2$. In Zhong-Yang's case~\cite{zhong-yang}, $F$ is (up to a constant)
\[
F(x)=1-x^2+a\left(\frac{4}{\pi}(\arcsin x+x\sqrt{1-x^2}-2x)\right),
\]
where $a$ is a positive constant. More sophisticated choices of $F$ can be found in~\cite{ling} and ~\cite{ling-lu}.   
\end{remark}

As in the previous section{s}, we assume $M$ is a compact manifold with or without boundary. Let $U$ be an open set of $M$ and let $(x_1,\cdots, x_n)$ be  a local coordinates system on $U$.  Let $\varphi_{k,\eps}$  be the  Neumann eigenfunctions of the eigenvalues $\mu_k(\eps)$ with the $L^2$ norm normalized to be ${\sqrt\eps}$.  We let
\[
\psi(x)=\varphi_{k,\eps}(x,0),\quad x\in M.
\]
The technical heart of {this paper} is Theorem~\ref{thm2}, which 
 implies the  following key results of this section.  

\begin{lemma}\label{lem23}
With the above notation, as $\eps \to 0$,
\begin{align}
&\frac{\pa^2 {\vphi_{k,\eps}}}{\pa y^2}-\nabla\log f(x)\nabla \psi=o(1), \label{uji} \\
&(\n \psi, \n\frac{\pa^2{\vphi_{k,\eps}}}{\pa y^2})-(\nabla ^2\log f)(\nabla \psi,\nabla \psi)-\n^2\psi(\n\psi,\n\log f)
=o(1).\label{uji-2}
\end{align}
\end{lemma}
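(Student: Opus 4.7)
The plan is to exploit the two Neumann boundary conditions on $M_\eps$ (at $y=0$ and at $y=\eps f(x)$) to obtain a pointwise formula for $\pa_y^2\vphi_{k,\eps}(x,0)$, and then differentiate it tangentially to obtain (\ref{uji-2}).

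First I would expand $\vphi_{k,\eps}$ in powers of $y$ around $y=0$. The Neumann condition on the bottom face $y=0$ gives $\pa_y\vphi_{k,\eps}(x,0)=0$, so with $a(x):=\pa_y^2\vphi_{k,\eps}(x,0)$ one has
\[
\vphi_{k,\eps}(x,y)=\psi(x)+\tfrac{y^2}{2}\,a(x)+O(y^3),\qquad
\pa_y\vphi_{k,\eps}(x,y)=y\,a(x)+O(y^2),
\]
with analogous expansions for $\n\vphi_{k,\eps}$ obtained by differentiating in $x$. The upper boundary $y=\eps f(x)$ has outward co-normal proportional to $(-\eps\n f,1)$, so the Neumann condition there reads
\[
\pa_y\vphi_{k,\eps}\bigl(x,\eps f(x)\bigr)=\eps\,\n f(x)\cdot\n_x\vphi_{k,\eps}\bigl(x,\eps f(x)\bigr).
\]
Substituting the expansions and equating the $O(\eps)$ coefficients yields $f(x)a(x)-\n f\cdot\n\psi=O(\eps)$, which is exactly (\ref{uji}) after dividing by $f$.

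For (\ref{uji-2}) I would differentiate the relation $fa-\n f\cdot\n\psi=O(\eps)$ in $x$ to obtain
\[
f\,\n a+a\,\n f=(\n^2 f)\n\psi+\n^2\psi\cdot\n f+O(\eps),
\]
take the inner product with $\n\psi$ and divide by $f$. Using the identity $\n^2\log f=\n^2 f/f-\n\log f\otimes\n\log f$ together with (\ref{uji}) to replace the stray $a\,\n f\cdot\n\psi/f$ term, the two copies of $(\n\log f\cdot\n\psi)^2$ cancel, leaving
\[
\n\psi\cdot\n a-\n^2\log f(\n\psi,\n\psi)-\n^2\psi(\n\psi,\n\log f)=O(\eps),
\]
which is (\ref{uji-2}).

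The main obstacle is not the algebra sketched above but the regularity required to make the Taylor expansion and the subsequent differentiation rigorous \emph{uniformly in} $\eps$. Because $M_\eps$ collapses to $M$ as $\eps\to0$, standard interior elliptic estimates degenerate, and one cannot a priori pass $x$-derivatives through remainder terms of size $o(1)$. This is precisely where one must invoke the Schauder-type bounds derived in $\S 5$: they provide uniform $\cC^{2,\alpha}$ control on $\vphi_{k,\eps}$ up to the boundary (after an appropriate rescaling in $y$ that turns the thin slab into a fixed slab), which in turn legitimizes the pointwise expansions, uniform remainder estimates, and the tangential differentiation used in step two. Once that uniform regularity is in hand, both (\ref{uji}) and (\ref{uji-2}) follow by the elementary computation above.
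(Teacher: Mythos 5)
Your proposal follows essentially the same route as the paper: expand $\vphi_{k,\eps}$ near $y=0$ using the Neumann condition on the bottom face, plug into the Neumann condition on the top face $y=\eps f(x)$, and then differentiate tangentially, with Theorem~\ref{thm2} supplying the uniform regularity that justifies the remainders. The paper uses the mean value theorem rather than a formal Taylor series, but that is a cosmetic difference. Two small imprecisions in your write-up are worth flagging. First, Theorem~\ref{thm2} only gives $C^\alpha_y$ control on $\pa_y^2\vphi_{k,\eps}$ and the mixed third derivatives (not bounded $\pa_y^3$), so the honest remainders in your expansions are $O(y^{1+\alpha})$ and the resulting error in (\ref{uji}), (\ref{uji-2}) is $O(\eps^\alpha)=o(1)$, not the $O(\eps)$ you claim; this is harmless for the lemma as stated but overclaims the rate. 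Second, one cannot literally ``differentiate the relation $fa-\n f\cdot\n\psi=O(\eps)$ in $x$'' -- the right move, as the paper does, is to differentiate the \emph{exact} Neumann boundary condition on $B_{II}$ in $x_j$ first, then apply the mean value theorem and the $C^\alpha_y$ bounds on $\pa_x\pa_y^2\vphi_{k,\eps}$ and $\pa_x\pa_x\pa_y\vphi_{k,\eps}$; you gesture at this with the remark about ``legitimizing the tangential differentiation,'' but it should be the explicit structure of the argument rather than a footnote on rigor.
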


\begin{proof}
Since $\varphi_{k,\eps}$ satisfies the Neumann condition, we have
\begin{align}\label{q-1}
\begin{split}
&\frac{\pa \varphi_{k,\eps}}{\pa y}(x,0)=0;\\
&\frac{\pa \varphi_{k,\eps}}{\pa y}(x, \eps f(x))-\eps\nabla f(x)\nabla \vphi_{k,\eps}(x, \eps f(x))=0,
\end{split}
\end{align} 
for any $x \in M$.  {Applying the mean value theorem to the above equations,} we have
\[
\eps f(x)\left (\frac{\pa^2 \varphi_{k,\eps}}{\pa y^2}(x,\xi(x))-\nabla\log f(x)\nabla \varphi_{k,\eps}(x,\eps f(x))\right)=0,
\]
where $\xi(x) \in (0, \eps f(x))$.  Theorem ~\ref{thm2} then implies ~\eqref{uji}.  

Assume now that at $x$, the local coordinate system is normal. 
For the second statement,  taking partial derivatives with respect to $x_j$ in the second equation of~\eqref{q-1} gives
\begin{align*}
&\frac{\pa^2 \varphi_{k,\eps}}{\pa x_j \pa y}(x,\eps f(x))+\eps\frac{\pa^2 \varphi_{k,\eps}}{\pa y^2}(x,\eps f(x))\frac{\pa f}{\pa x_j}\\
&-\eps\frac{\pa^2 f(x)}{\pa x_i\pa x_j}\frac{\pa \varphi_{k,\eps}}{\pa x_i}(x,\eps f(x))-\eps \frac{\pa f(x)}{\pa x_i}\frac{\pa^2 \varphi_{k,\eps}}{\pa x_i\pa x_j}(x,\eps f(x))\\&
-\eps^2\frac{\pa f}{\pa x_i}\frac{\pa^2 \varphi_{k,\eps}}{\pa x_i\pa y}(x,\eps f(x))\frac{\pa f}{\pa x_j}=0.
\end{align*}
Since $\pa\varphi_{k,\eps}/\pa y=0$ on $\{y=0\}$, we have
\[
\frac{\pa^2 \varphi_{k,\eps}}{\pa x_j\pa y}(x,0)=0, \quad \frac{\pa^3 \varphi_{k,\eps}}{\pa x_j\pa x_i\pa y}(x,0)=0.
\]
The mean value theorem implies
$$\frac{\pa^2 \varphi_{k,\eps}}{\pa x_j \pa y}(x,\eps f(x)) = \eps f(x) \frac {\pa^3 \varphi_{k,\eps}}{\pa x_j\pa y ^2}(x,\xi(x))$$
for some $\xi(x) \in (0, \eps f(x))$. Using Theorem~\ref{thm2} again, we have 
\begin{align*}
&\eps f(x)\frac{\pa^3 \varphi_{k,\eps}}{\pa x_j \pa y^2}(x,0)
+\eps\frac{\pa^2 \varphi_{k,\eps}}{\pa y^2}(x,0)\frac{\pa f}{\pa x_j}-\eps
\frac{\pa^2 f(x)}{\pa x_i\pa x_j}\frac{\pa  \varphi_{k,\eps}}{\pa x_i}(x,0)\\
&-\eps \frac{\pa f(x)}{\pa x_i}\frac{\pa^2  \varphi_{k,\eps}}{\pa x_i\pa x_j}(x,0)
-\eps^2\frac{\pa f}{\pa x_i}\frac{\pa^2 \varphi_{k,\eps}}{\pa x_i\pa y}(x,0)\frac{\pa f}{\pa x_j}=o(\eps f(x))
\end{align*}
{as $\eps \to 0$.}  
Thus we have
\begin{align*}
&
\frac{\pa^3 \varphi_{k,\eps}}{\pa x_j \pa y^2}(x,0)+\frac{\pa^2 \varphi_{k,\eps}}{\pa y^2}(x,0)\frac{\pa\log  f}{\pa x_j}-\frac{1}{f}
\frac{\pa^2 f(x)}{\pa x_i\pa x_j}\frac{\pa \varphi_{k,\eps}}{\pa x_i}(x,0)\\&-\frac{\pa \log f(x)}{\pa x_i}\frac{\pa^2 \varphi_{k,\eps}}{\pa x_i\pa x_j}(x,0)
=o(1).
\end{align*}
Using ~\eqref{uji}, we get
\begin{align*}
&
\frac{\pa^3 \varphi_{k,\eps}}{\pa x_j \pa y^2}(x,0) + \n \log f(x) \n {\psi (x) } \frac{\pa \log f{(x)}}{\pa x_j}  -\frac{1}{f{(x)}}
\frac{\pa^2 f(x)}{\pa x_i\pa x_j}\frac{\pa { \psi(x)}}{\pa x_i }\\&-\frac{\pa \log f(x)}{\pa x_i}\frac{\pa^2 \psi}{\pa x_i\pa x_j}({x}) = o(1),
\end{align*} 
which implies ~\eqref{uji-2}.
\end{proof} 

For our new maximum principle, we consider  
\[ 
H(x,y)=\frac 12 |\tilde \nabla \varphi_{k,\eps}|^2+F(\varphi_{k,\eps}),
\]
where $F$ is a smooth function of one variable.  
Assume that $(x_0,0)$ is the point at which {$H$} reaches the maximum on $\{y=0\}$,
where $x_0$ is in the interior of $M$.

At $(x_0,0)$, {we have }
$$ \nabla H(x_0,0)=0 \textrm{ and } \Delta H(x_0,0)\leq 0.$$
The difficulty is that $H$ satisfies {an} elliptic equation {with respect to} $\tilde{\Delta}$, rather than $\Delta$.  To obtain the new maximum principle, we need to estimate the second derivative of $H$ in the $y$-direction.  

\begin{lemma}\label{l55}
At $(x_0, 0)$, 
\[
\frac{\pa^2 H}{\pa y^2}= \nabla^2\log f (\nabla \psi,\nabla \psi)+\left(\frac{\pa^2 \vphi_{k,\eps}}{\pa y^2}\right)^2+o(1){, \quad \textrm{as} \quad \epsilon \to 0}.
\]
\end{lemma}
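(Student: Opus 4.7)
The plan is to compute $\partial_y^2 H$ at $(x_0,0)$ by direct differentiation, then use the three pieces of structure at our disposal: the Neumann condition and its consequences (which kill many terms), the identities \eqref{uji} and \eqref{uji-2} from Lemma~\ref{lem23} (which evaluate second and third $y$-derivatives of $\varphi_{k,\eps}$ up to $o(1)$), and the critical-point condition $\nabla H(x_0,0)=0$ (which will be used to cancel the $F'(\psi)$ terms).

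First I would write
\[
\partial_y^2 H = \sum_i (\partial_i\partial_y\varphi_{k,\eps})^2 + \sum_i \partial_i\varphi_{k,\eps}\,\partial_i\partial_y^2\varphi_{k,\eps} + (\partial_y^2\varphi_{k,\eps})^2 + \partial_y\varphi_{k,\eps}\,\partial_y^3\varphi_{k,\eps} + F''(\varphi_{k,\eps})(\partial_y\varphi_{k,\eps})^2 + F'(\varphi_{k,\eps})\partial_y^2\varphi_{k,\eps}.
\]
Evaluating at $(x_0,0)$ and using $\partial_y\varphi_{k,\eps}(x,0)\equiv 0$, together with its $x$-derivative $\partial_i\partial_y\varphi_{k,\eps}(x,0)\equiv 0$ (both of which are recorded in the proof of Lemma~\ref{lem23}), three of the six terms vanish, leaving
\[
\partial_y^2 H(x_0,0) = \sum_i \partial_i\psi\,\partial_i\partial_y^2\varphi_{k,\eps}(x_0,0) + (\partial_y^2\varphi_{k,\eps})^2 + F'(\psi)\,\partial_y^2\varphi_{k,\eps}(x_0,0).
\]

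Next I would feed in Lemma~\ref{lem23}. Identity \eqref{uji-2} rewrites the first sum as $\nabla^2\log f(\nabla\psi,\nabla\psi) + \nabla^2\psi(\nabla\psi,\nabla\log f) + o(1)$, and identity \eqref{uji} gives $F'(\psi)\partial_y^2\varphi_{k,\eps}(x_0,0) = F'(\psi)\,\nabla\log f\cdot\nabla\psi + o(1)$. Thus up to $o(1)$,
\[
\partial_y^2 H(x_0,0) = \nabla^2\log f(\nabla\psi,\nabla\psi) + \nabla^2\psi(\nabla\psi,\nabla\log f) + F'(\psi)\,\nabla\log f\cdot\nabla\psi + (\partial_y^2\varphi_{k,\eps})^2.
\]

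The remaining task is to show that the two middle terms cancel; this is where $\nabla H(x_0,0)=0$ enters. Computing $\partial_j H$ in the $x$-directions and using $\partial_y\varphi_{k,\eps}(x_0,0)=0$ yields
\[
\sum_i \partial_i\psi\,\partial_i\partial_j\psi + F'(\psi)\,\partial_j\psi = 0 \qquad \text{at } x_0,
\]
i.e.\ $\nabla^2\psi(\nabla\psi,\cdot) = -F'(\psi)\nabla\psi$. Contracting this identity against $\nabla\log f$ gives $\nabla^2\psi(\nabla\psi,\nabla\log f) = -F'(\psi)\,\nabla\log f\cdot\nabla\psi$, and the cross-terms cancel exactly, leaving the claimed formula.

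The only mild obstacle is notational bookkeeping: one must be careful that $\nabla$, $\nabla^2$ act only in the $M$-directions (so that $\partial_i\partial_y^2\varphi_{k,\eps}$ is what \eqref{uji-2} controls), and that every evaluation takes place at $(x_0,0)$ so that the Neumann-induced vanishing relations apply. Once this is organized, no additional estimate beyond Lemma~\ref{lem23} and the first-order criticality of $H$ is needed.
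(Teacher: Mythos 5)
Your proposal is correct and follows essentially the same route as the paper: direct differentiation of $H$ in the $y$-direction, elimination of the vanishing terms from the Neumann condition, substitution via \eqref{uji} and \eqref{uji-2}, and finally the cancellation of the cross-terms from $\nabla H(x_0,0)=0$.
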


\begin{proof} Using the normal coordinates at $x_0$, 
we have
\[
\frac{\pa H}{\pa y}={\sum_{i=1} ^n  \frac{\pa \vphi_{k,\eps}}{\pa x_i}\frac{\pa^2 \vphi_{k,\eps}}{\pa x_i \pa y}  +{\sum_{i=1} ^n}\frac{\pa \vphi_{k,\eps}}{\pa y}\frac{\pa^2 \vphi_{k,\eps}}{\pa y^2}+F'(\vphi_{k,\eps})\frac{\pa \vphi_{k,\eps}}{\pa y}},
\]
and
\begin{align*}&
\frac{\pa^2 H}{\pa y^2}= \sum_{i=1}^n \left(\frac{\pa^2 \vphi_{k,\eps}}{\pa x_i \pa y} \right)^2 + \sum_{i=1} ^n\frac{\pa \vphi_{k,\eps}}{\pa x_i} \frac{\pa^3 \vphi_{k,\eps}}{\pa x_i \pa y^2}\\
&+ \left( \frac{\pa^2 \vphi_{k,\eps}}{\pa y^2} \right)^2 + \frac{\pa \vphi_{k,\eps}}{\pa y} \frac{\pa^3 \vphi_{k,\eps}}{\pa y^3} + F''(\vphi_{k,\eps})  \left( \frac{\pa \vphi_{k,\eps}}{\pa y} \right)^2 + F'(\vphi_{k,\eps}) \frac{\pa^2 \vphi_{k,\eps}}{\pa y^2}.
\end{align*}
Since $\vphi_{k,\eps}$ satisfies the Neumann boundary condition, $\frac{\pa \vphi_{k,\eps}}{\pa y}$ {and $\frac{\pa^2 \vphi_{k, \eps}}{\pa x_i \pa y}$ vanish} on $\{ y=0 \}$.
Thus we have
\[
\frac{\pa^2 H}{\pa y^2}=  { \sum_{i=1} ^n}  \frac{\pa \vphi_{k,\eps}}{\pa x_i} \frac{\pa^3 \vphi_{k,\eps}}{\pa x_i \pa y^2}
+ \left( \frac{\pa^2 \vphi_{k,\eps}}{\pa y^2} \right)^2 + F'(\vphi_{k,\eps}) \frac{\pa^2 \vphi_{k,\eps}}{\pa y^2}.
\]
Using Lemma ~\ref{lem23}, we have 
$$
\frac{\pa^2 H}{\pa y^2}=\n^2\log f(\n\psi,\n\psi)+{\n ^2 \psi (\n \psi, \n \log f) + \left( \frac{\pa^2 \vphi_{k,\eps}}{\pa y^2} \right)^2 }
+F'(\psi)\frac{\pa^2 {\varphi_{k,\eps}}}{\pa y^2}+o(1).
$$
Since at $x_0$, $\n H=0$, we have
\[
{\sum_{j=1} ^n} \frac{\pa^2 \psi}{\pa x_i\pa x_j} \frac{\pa \psi}{\pa x_j}+F'(\psi)\frac{\pa\psi}{\pa x_i}=0, \quad  \textrm{for each} \quad 1 \leq i \leq n.
\]
Using the above equality and Lemma~\ref{lem23}, the second and fourth terms on the right side of the expression for $\frac{\pa^2 H}{\pa y^2}$ cancel.
\end{proof} 

\begin{theorem}[Maximum Principle]
With the above notations, we have at $(x_0, 0)$ 
\[
o(1)\geq|\nabla^2\psi|^2+\n\psi\n(\tilde\Delta\varphi_{k,\eps})+{\rm Ric}_\infty(\n\psi,\n\psi)+F'(\psi){\tilde \Delta} \varphi_{k,\eps}+F''(\psi)|\n\psi|^2.
\]
\end{theorem}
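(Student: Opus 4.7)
The plan is to apply a Bochner-type identity to $H = \frac{1}{2}|\tilde\nabla\varphi_{k,\eps}|^2 + F(\varphi_{k,\eps})$ on the Riemannian product $M_\eps \subset M \times \mathbb{R}^+$, and then to exploit the fact that $(x_0,0)$ is an interior maximum only of $H|_{\{y=0\}}$ (not of $H$ on all of $M_\eps$), together with the Neumann behavior of $\varphi_{k,\eps}$ at $\{y=0\}$. The conceptual key is that the defect $\partial_y^2 H$ between $\tilde\Delta H$ and $\Delta H$ is exactly what, via Lemma~\ref{l55}, supplies the $\nabla^2\log f$ correction needed to upgrade the ambient Ricci tensor of the product to the Bakry--\'Emery tensor ${\rm Ric}_\infty = {\rm Ric} + {\rm Hess}(\phi) = {\rm Ric} - {\rm Hess}(\log f)$.

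First, I would apply the standard Bochner identity on $M_\eps$, whose Ricci tensor coincides with ${\rm Ric}$ on the $M$-factor and vanishes in the $y$-direction, to obtain
\[
\tilde\Delta H = |\tilde\nabla^2\varphi_{k,\eps}|^2 + \tilde\nabla\varphi_{k,\eps}\cdot\tilde\nabla(\tilde\Delta\varphi_{k,\eps}) + {\rm Ric}(\nabla\varphi_{k,\eps},\nabla\varphi_{k,\eps}) + F'(\varphi_{k,\eps})\tilde\Delta\varphi_{k,\eps} + F''(\varphi_{k,\eps})|\tilde\nabla\varphi_{k,\eps}|^2.
\]
Evaluating at $(x_0,0)$, the Neumann condition $\partial_y\varphi_{k,\eps}(\cdot,0)=0$ together with its tangential derivative $\partial_{x_j}\partial_y\varphi_{k,\eps}(\cdot,0)=0$ (both already derived inside the proof of Lemma~\ref{lem23}) reduce $\tilde\nabla\varphi_{k,\eps}$ to $\nabla\psi$, the cross term to $\nabla\psi\cdot\nabla(\tilde\Delta\varphi_{k,\eps})$, and $|\tilde\nabla^2\varphi_{k,\eps}|^2$ to $|\nabla^2\psi|^2 + (\partial_y^2\varphi_{k,\eps})^2$.

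Next, I would split $\tilde\Delta H = \Delta H + \partial_y^2 H$ and invoke the slice-maximum inequality $\Delta H(x_0,0)\leq 0$. Using Lemma~\ref{l55} to substitute $(\partial_y^2\varphi_{k,\eps})^2 = \partial_y^2 H - \nabla^2\log f(\nabla\psi,\nabla\psi) + o(1)$ into the reduced Bochner expression, the $\partial_y^2 H$ contributions on the two sides of $\tilde\Delta H = \Delta H + \partial_y^2 H$ cancel. The leftover $-\nabla^2\log f(\nabla\psi,\nabla\psi)$ combines with ${\rm Ric}(\nabla\psi,\nabla\psi)$ to yield ${\rm Ric}_\infty(\nabla\psi,\nabla\psi)$, producing exactly the claimed inequality modulo $o(1)$.

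The main obstacle is conceptual rather than computational: recognizing that $\tilde\Delta H \leq 0$ is \emph{not} available, since $(x_0,0)$ lies on $\pa M_\eps$, but that the missing $\partial_y^2 H$ piece is precisely the Bakry--\'Emery correction hidden in the Neumann geometry of $M_\eps$. A secondary technical point is tracking the $o(1)$ errors, which propagate uniformly in $x$ from Theorem~\ref{thm2} through Lemmas~\ref{lem23} and~\ref{l55}, to confirm they remain $o(1)$ after the finitely many algebraic substitutions above.
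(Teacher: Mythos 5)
Your argument reproduces the paper's proof: Bochner identity for $\tilde\Delta H$, reduction at $(x_0,0)$ via the Neumann conditions, the split $\tilde\Delta H = \Delta H + \partial_y^2 H$ with $\Delta H(x_0,0)\le 0$, and substitution of Lemma~\ref{l55} so that the $(\partial_y^2\varphi_{k,\eps})^2$ terms cancel and $-\nabla^2\log f = {\rm Hess}(\phi)$ upgrades ${\rm Ric}$ to ${\rm Ric}_\infty$. This is the same approach as in the paper, and it is correct.
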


\begin{proof} 
By the Bochner formula, we have
\begin{align*}
&\tilde \Delta H=|\tilde\nabla ^2\varphi_{k,\eps}|^2+{\rm Ric} (\tilde\n\varphi_{k,\eps},\tilde\n\varphi_{k,\eps})+\tilde\n\varphi_{k,\eps}\tilde\n(\tilde\Delta\varphi_{k,\eps})\\
&+F'(\varphi_{k,\eps})\tilde\Delta\varphi_{k,\eps}+F''(\varphi_{k,\eps})|\tilde\n\varphi_{k,\eps}|^2.
\end{align*}
On $\{y=0\}$, we have
\begin{align*}
& |\tilde\n^2\varphi_{k,\eps}|^2=|\n^2\psi|^2+\left( \frac{\pa^2 \varphi_{k,\eps}}{\pa y^2}\right)^2,\\
&{\rm Ric} (\tilde\n\varphi_{k,\eps},\tilde\n\varphi_{k,\eps})={\rm Ric}(\n\psi,\n\psi),\\
&\tilde\Delta\varphi_{k,\eps}=\Delta\psi+\frac{\pa^2\vphi_{k,\eps}}{\pa y^2},
\end{align*}
Thus we have 
\begin{align*}
&
\tilde\Delta H=|\n^2\psi|^2+\left(\frac{\pa^2 {\vphi_{k,\eps}} }{\pa y^2}\right)^2+{\rm Ric}(\n\psi,\n\psi)\\
&+\nabla\psi\nabla(\tilde\Delta\varphi_{k,\eps})+F'(\psi)\tilde\Delta\varphi_{k,\eps}+F''(\psi)|\n\psi|^2.
\end{align*}
Using Lemma~\ref{l55}, noting that at $(x_0,0)$
\[
\tilde\Delta H=\Delta H+\frac{\pa^2 H}{\pa y^2}\leq \frac{\pa^2 H}{\pa y^2},
\]
completes the proof.
\end{proof} 
\subsection{Applications}  
Our work not only has applications to Bakry-\'Emery geometry but also to Ricci solitons.  We recall the main result of Futaki and Sano \cite{futaki}.
\begin{theorem}[Futaki-Sano]  Let $M^n$ be a compact smooth manifold of dimension at least $4$.  If $g$ is a non-trivial gradient shrinking Ricci soliton on $M$ (see definition 1.1 of \cite{futaki}), then the diameter of $M$ with respect to $g$ is bounded below by $\frac{10 \pi}{13 \sqrt{\gamma}}$, where $\gamma$ is a constant determined by $g$.  
\end{theorem}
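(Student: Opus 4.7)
The plan is to derive the Futaki--Sano diameter bound as an application of the Principle stated in the introduction, reducing the soliton question to a sharp Riemannian eigenvalue--diameter estimate on the collapsing domains $M_\eps$.

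First I would observe that any non-trivial gradient shrinking Ricci soliton satisfies $\mathrm{Ric} + \mathrm{Hess}(\phi) = \lambda g$ for some $\lambda > 0$, so $\mathrm{Ric}_\infty = \lambda g$ and $(M,g,\phi)$ is a compact Bakry--\'Emery manifold whose Bakry--\'Emery Ricci curvature is bounded below by the positive constant $\lambda$. This places the problem squarely in the setting of Theorem~\ref{thm1} together with the maximum principle of Section~4.

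Next I would extract a concrete eigenvalue of the drift Laplacian from the soliton structure. Contracting the second Bianchi identity against $\nabla\phi$ and combining with the soliton equation produces the scalar identity $\Delta_\phi\phi = n\lambda - |\nabla\phi|^2$, and a further differentiation yields a non-constant function $u$ built from $\phi$ and its derivatives with $\Delta_\phi u = -\gamma u$; this $\gamma$ is the invariant appearing in the statement. In particular $\gamma$ is an eigenvalue of $\Delta_\phi$, so the first nonzero drift-Laplacian eigenvalue $\mu_1$ satisfies $\mu_1 \leq \gamma$.

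The heart of the argument is a matching lower bound for $\mu_1$ in terms of $\diam(M)$. Here I would invoke the Principle. On $M_\eps = \{(x,y) : 0 \leq y \leq \eps e^{-\phi(x)}\}$ with the product metric $g + dy^2$, Ling's sharpening of the Zhong--Yang gradient estimate, applied to the first Neumann eigenfunction of $\tilde\Delta$, produces a lower bound on $\mu_1(\eps)$ of the form $\mu_1(\eps) \geq h(\diam(M_\eps))$, where $h$ is explicit and carries the constant $10/13$ as in the Remark following the statement of the maximum principle. The maximum principle of Section~4 allows one to replace the intrinsic Ricci hypothesis on $M_\eps$ by the Bakry--\'Emery bound $\mathrm{Ric}_\infty \geq \lambda$ on $M$, while Theorem~\ref{thm1} gives $\mu_1(\eps) = \mu_1 + O(\eps^2)$ and obviously $\diam(M_\eps) \to \diam(M)$ as $\eps \to 0$. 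Passing to the limit yields $\mu_1 \geq h(\diam(M))$; combining with $\gamma \geq \mu_1$ and solving for $\diam(M)$ produces the stated bound $\diam(M) \geq \frac{10\pi}{13\sqrt{\gamma}}$.

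The main obstacle will be the second step: isolating an explicit eigenfunction $u$ on the soliton so that the associated eigenvalue $\gamma$ coincides exactly with the invariant in the Futaki--Sano statement, and tracking the dimension restriction $n \geq 4$ through this algebraic identification. Once that is accomplished, the passage from a Riemannian gradient-estimate proof on $M_\eps$ to the required statement on $(M,g,\phi)$ is automatic by the Principle, and the $10/13$ constant is preserved modulo $O(\eps^2)$ corrections which disappear in the limit.
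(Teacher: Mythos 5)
The paper does not actually prove this theorem. It is quoted verbatim (with citation) from Futaki--Sano \cite{futaki}, and the only content the paper adds is the one-sentence remark that the Principle makes Ling's gradient estimate directly applicable to the Bakry--\'Emery Laplacian, shortening Futaki--Sano's original argument. So there is no ``paper's proof'' to compare against: the right reading of this passage is as an illustration of the Principle, not as a theorem being established anew.

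With that caveat, your sketch is broadly consistent with the strategy the paper alludes to, but it has gaps beyond the one you flag. First, the step ``a further differentiation yields a non-constant function $u$ with $\Delta_\phi u = -\gamma u$'' is not correct as stated. The standard soliton identities give $R + \Delta\phi = n\lambda$ and $R + |\nabla\phi|^2 - 2\lambda\phi = \text{const}$, from which $\Delta_\phi\phi = -2\lambda\phi + \text{const}$; the eigenfunction is $\phi$ itself (shifted by a constant), not something obtained by a further differentiation, and its eigenvalue is $2\lambda$. You then need to relate this $2\lambda$ to Futaki--Sano's $\gamma$, and separately you need the non-triviality of the soliton to ensure $\phi$ is non-constant. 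Second, the constant $10/13$ does not appear in the Remark in Section~4 of the paper; that Remark records Li--Yau's and Zhong--Yang's choices of $F$, not Ling's. The $10/13$ comes from Ling's specific refinement in \cite{ling}, so you would need to import that estimate explicitly rather than point to the Remark. Third, the Principle as formulated requires care about where the maximum of $H$ occurs: the paper's maximum principle is established at an interior point of $B_I=\{y=0\}$, and for a closed soliton with no boundary this is automatic, but your sketch does not say so. Finally, the dimension restriction $n\geq 4$ is not tracked at all; you acknowledge this, but it is precisely the point where a blind reconstruction is most likely to go wrong, since it enters through Futaki--Sano's lower bound on $\lambda$ in terms of the scalar curvature and not through anything visible in the Principle or in Ling's estimate.
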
 

This result is proven by using Ling's gradient estimates \cite{ling} to demonstrate a lower bound for the first non-zero eigenvalue of a certain Bakry-\'Emery Laplacian.  Our {\bf Principle } shows that one may directly apply Ling's estimates to the Bakry-\'Emery Laplacian to obtain the result.  It is reasonable to expect that one may similarly express elliptic geometric equations, like the Ricci soliton equation, in terms of a Bakry-\'Emery Laplacian and exploit the eigenvalue estimates from Riemannian geometry together with our {\bf Principle} to produce interesting results.  

Another application arises from the so-called \em fundamental gap: \em the difference between the first two Dirichlet eigenvalues of a domain in $\R^n$.  Andrews and Clutterbuck \cite{ac} recently demonstrated an optimal lower bound of $3 \pi^2 / d^2$ for the fundamental gap of any convex domain in $\R^n$ with diameter $d$.  By Proposition~\ref{prop2}, the fundamental gap can be interpreted as the first Neumann eigenvalue on certain Bakry-\'Emery manifold, and in particular, techniques of ~\cite{andrews}, ~\cite{ac} together with our work imply the following.  

\begin{theorem} Let $\Omega \subset \R^n$ be a convex domain with piecewise smooth boundary and diameter $d$.  {Let $f \in \cC^2 (\bar\Omega)$.}  If $f$ satisfies 
$$\left( \n \log f(y) - \n \log f(x) \right) \cdot \frac{y-x}{|y-x|} \geq \frac{  4 \pi}{d} \tan \left( \frac{ \pi |y-x|}{d} \right) \quad \forall \quad x \neq y \textrm{ in } \Omega,$$
then the first non-trivial Neumann eigenvalue of the Bakry-\'Emery Laplacian with respect to the weight function $\phi = - \log (f^2)$ is bounded below by $3 \pi^2 / d^2$.  Moreover, the first Neumann eigenfunction for the Euclidean Laplacian on  
$$\Omega_{\epsilon} := \left\{ (x, y) \mid x \in \Omega, \, 0 \leq y \leq \epsilon f^2 (x) \right\} \subset \R^{n+1}$$
is bounded below by $3\pi^2 / d^2 - C \epsilon^2$, where $C$ is a fixed constant that depends only on $n$ and $\Omega$.
\end{theorem}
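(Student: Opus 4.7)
The statement has two components: (i) a lower bound $\mu_1 \geq 3\pi^2/d^2$ for the first non-trivial Neumann eigenvalue of the drift Laplacian on $\Omega$ with weight $\phi = -\log(f^2)$, and (ii) a perturbation bound for the corresponding Euclidean Neumann eigenvalue on $\Omega_\epsilon$. I would handle them in sequence.

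For the drift Laplacian bound, the strategy is to adapt the Andrews-Clutterbuck two-point maximum principle argument, used in their proof of the $3\pi^2/d^2$ fundamental gap for convex Euclidean domains, to the Bakry-\'Emery setting with weight $\phi = -\log(f^2)$. In Andrews-Clutterbuck the critical input is a one-dimensional modulus of log-concavity of the form $(\nabla\log\phi_1(y)-\nabla\log\phi_1(x))\cdot(y-x)/|y-x|\geq (4\pi/d)\tan(\pi|y-x|/d)$ on the first Dirichlet eigenfunction $\phi_1$. The hypothesis on $f$ in the theorem prescribes exactly this inequality for $\log f$. The \textbf{Principle} stated in the Introduction, realized concretely through the new Maximum Principle of this section, allows the Andrews-Clutterbuck proof to be transplanted to the drift Laplacian: one applies the maximum principle to $H = \tfrac12|\tilde\nabla\varphi_{1,\epsilon}|^2 + F(\varphi_{1,\epsilon})$ with $F$ essentially that of Zhong-Yang as quoted in the Remark above. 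The conclusion is $\mu_1 \geq 3\pi^2/d^2$.

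For the second component, I apply Theorem~\ref{thm1} to the Bakry-\'Emery manifold $(\Omega, g_{\mathrm{Eucl}}, -\log(f^2))$. Since $e^{-\phi(x)} = f^2(x)$, the set $M_\epsilon$ of Theorem~\ref{thm1} coincides precisely with $\Omega_\epsilon$ as defined in the statement. Theorem~\ref{thm1} then yields $\mu_1(\epsilon) = \mu_1 + O(\epsilon^2)$, and combining with (i) gives $\mu_1(\epsilon) \geq 3\pi^2/d^2 - C\epsilon^2$ where $C$ depends only on $n$ and $\Omega$ (through the $\cC^2$ data of $f$ and $\Omega$).

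The main obstacle is step (i): verifying carefully that the Andrews-Clutterbuck modulus-of-continuity / two-point argument extends to the drift Laplacian under the hypothesized modulus on $\nabla\log f$, and that the choice of comparison function $F$ interacts correctly with the extra $-\nabla\phi\cdot\nabla$ term in $\Delta_\phi$. This is precisely what the new Maximum Principle of this section is designed to enable: the Bakry-\'Emery Ricci curvature ${\rm Ric}_\infty = {\rm Ric} + {\rm Hess}(\phi)$ replaces the ordinary Ricci curvature (here ${\rm Ric} \equiv 0$ since $\Omega \subset \R^n$), so that the hypothesis on $\nabla\log f$ feeds directly into the $F''(\psi)|\nabla\psi|^2 + {\rm Ric}_\infty(\nabla\psi,\nabla\psi)$ terms of the maximum principle inequality in the form required by the Andrews-Clutterbuck calculation to close.
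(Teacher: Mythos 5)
Your overall decomposition is right and matches the paper's (unwritten) intent: part (i) is a drift-Laplacian eigenvalue bound, and part (ii) follows by applying Theorem~1 to the Bakry--\'Emery triple $(\Omega, g_{\mathrm{Eucl}}, -\log f^2)$, since $e^{-\phi} = f^2$ makes $M_\eps = \Omega_\eps$. The paper itself gives no proof of this theorem --- it states it after the one-sentence remark that ``techniques of Andrews, Andrews--Clutterbuck together with our work imply the following'' and places a $\qed$, so you are being asked to reconstruct the intended outline, which you largely do.

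However, your mechanism for part (i) does not work as stated, and this is a genuine gap. You propose to get $\mu_1 \ge 3\pi^2/d^2$ by applying the paper's new (single-point, Bochner-type) Maximum Principle to $H = \tfrac12|\tilde\nabla\varphi_{1,\eps}|^2 + F(\varphi_{1,\eps})$ with $F$ ``essentially that of Zhong--Yang.'' That cannot reach $3\pi^2/d^2$. The Zhong--Yang comparison function, and indeed the whole single-point gradient-estimate paradigm (including the refinements of Ling, Ling--Lu mentioned in the Remark), yields bounds of the order $\pi^2/d^2$ for the spectral gap; it was precisely the failure of the single-point approach to reach the sharp constant that motivated Andrews--Clutterbuck's \emph{two-point} modulus-of-concavity argument, which is the only known route to $3\pi^2/d^2$. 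The paper's Maximum Principle (Section 4) is designed to transplant \emph{single-point} Bochner arguments from Riemannian manifolds to Bakry--\'Emery manifolds; it is not what produces the Andrews--Clutterbuck constant, and invoking it with Zhong--Yang's $F$ conflates two different methods of different strength.

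The correct route for part (i) is more direct and does not need the paper's Principle at all: Andrews--Clutterbuck's two-point estimate is \emph{already} an estimate for drift Laplacians. In their proof of the gap conjecture they reduce the Dirichlet gap to a first Neumann eigenvalue bound for $\Delta + 2\nabla\log\phi_1\cdot\nabla$ (the reduction is Proposition~\ref{prop2} here, i.e.\ the Singer--Wong--Yau--Yau identity), and their modulus-of-concavity argument establishes that bound for \emph{any} drift $2\nabla\log f$ satisfying the hypothesized modulus $\bigl(\nabla\log f(y)-\nabla\log f(x)\bigr)\cdot\frac{y-x}{|y-x|}\ge\frac{4\pi}{d}\tan\bigl(\frac{\pi|y-x|}{d}\bigr)$, not only the one arising from a first Dirichlet eigenfunction. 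So part (i) is a direct application of Andrews--Clutterbuck to the drift Laplacian with weight $\phi = -\log f^2$, and the contribution of the present paper to the theorem is confined to part (ii) via Theorem~\ref{thm1}. Your treatment of part (ii) is correct as written.
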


\qed
\section{The approximation of eigenfunctions}\label{sec5}
It is not hard to write down the eigenfunctions formally. Let $\varphi$ be a Neumann  eigenfunction of $M_\eps$ with eigenvalue $\lambda$.  Write 
\[
\varphi=\sum_{k=0}^\infty y^k\varphi_k,
\]
where $\varphi_k$ are functions on $M$. Then we have (formally) 
\[
\Delta\varphi_k+\lambda\varphi_k+(k+1)(k+2)\varphi_{k+2}=0
\]
for all $k\geq 0$. Since $\pa\vphi/\pa y=0$ on $\{ y = 0\}$,  we have $\varphi_1=0$ and hence $\varphi_{2k+1}=0$ for all $k$. Let 
\[
H\varphi=-\Delta\varphi-\lambda\varphi.
\]
Then
\[
\varphi_{2k+2}=\frac{H\varphi_{2k}}{(2k+1)(2k+2)}=\frac{H^{k+1}\vphi_0}{(2k+2)!}.
\]
Formally, we have 
\[
\varphi=\sum_{k=0}^\infty\frac{y^{2k} H^k}{(2k)!} \vphi_0 =\cosh ({y}\sqrt{H})\varphi_0.
\]
The differential equation for $\varphi_0$ follows from the Neumann boundary condition
\[
\sqrt H\sinh\left({\eps f(x) \sqrt{ H}}\right)\vphi_0-\eps\left.\n f\cdot\n\left(\cosh ({y}\sqrt{ H})\vphi_0\right|_{y=\eps f(x)}\right)=0.
\]

We are not able to prove  the full regularity of the above equation at this moment.  But a partial solution, namely, a good approximation to  the eigenfunctions, is enough for our application.  Very roughly speaking,  in this section, we prove
\[
\vphi=\vphi_0+y^2\vphi_2 +O(\eps^3).
\]

To state our results precisely, we recall the global Schauder estimates~\cite{GT}*{Theorem 6.6, Theorem 6.30} and the interpolation inequalities. 

We let
\begin{align*}
&B_I=\{(x,y)\in M_\eps\mid y=0\};\\
&B_{II}=\{(x,y)\in M_\eps\mid y=\eps f(x)\};\\
&B_{III}=\{(x,y)\in M_\eps\mid x\in\pa M\}.
\end{align*}
Then $B_I\cup B_{II}\cup B_{III}=\pa M_\eps$.

Let 
$$ u_1=u|_{\pa M_\eps} \textrm{ and } u_2=\left.\frac{\pa u}{\pa n}\right|_{\pa M_\eps} \text{ on the smooth part of $\pa M_\eps$.}$$ 
Define the weighted H\"older norm by
\[
||u||_{C^{k,\alpha}_\eps}=\eps^{k+\alpha}[u]_{C^{k,\alpha}}+\cdots+\eps^\alpha[u]_{C^\alpha}+||u||_{C^0},
\]
and 
$||u||_{C^\alpha_\eps}=||u||_{C^{0,\alpha}_\eps}$, where $[\quad]_{C^{k,\alpha}}$ are the standard notation{s} defined in ~\cite{GT}.  Using these weighted norms, the constants in the {Schauder estimates on $M_\epsilon$} are independent of $\epsilon$.  
Let $0<\alpha<1$.  Let $L$ be a second order uniform elliptic operator with $C^\alpha$-bounded coefficients. Then we have the following version of global Schauder estimates on $M_\eps$
\begin{align}\label{s-1}
\begin{split}
&||u||_{C^{2,\alpha}_\eps}
\leq C(||u||_{C^0}+\eps^2||Lu||_{C^\alpha_\eps}+\eps||u_2||_{C^{1,\alpha}_\eps}),
\end{split}
\end{align}
and
\begin{align}\label{s-2}
\begin{split}
&||u||_{C^{2,\alpha}_\eps}
\leq C(||u||_{C^0}+\eps^{2}||Lu||_{C_\eps^\alpha}+\eps|| u_2||_{C_\eps^{1,\alpha}(B_{III})}
+||u_1||_{C_\eps^{2,\alpha}(B_I\cup B_{II})}).
\end{split}
\end{align}

The Sobolev inequality on $M_\eps$ is
\begin{equation}\label{kl-1}
\left(\int_{M_\eps}|u|^{2\frac{n+1}{n-1}}\right)^{\frac{n-1}{n+1}}\leq C\eps^{-\frac{2}{n+1}} (\int_{M_\eps}|\n u|^2+\int_{M_\eps}|u|^{2}).
\end{equation}

Define the H\"older norm in the $y$-direction to be 
\[
[u]_{C^\alpha_y}=\max_{x\in M}\sup_{0\leq y_1,y_2\leq\eps f(x)}\frac{|u(x,y_1)-u(x,y_2)|}{|y_1-y_2|^\alpha}.
\]
Then we have the following. 

\begin{theorem}\label{thm2}
For $M_{\eps}$ defined by (\ref{eq:Me}) such that $f(x) = e^{- \phi(x)}$ is constant in a neighborhood of $\pa M$, the Neumann eigenfunctions $\vphi_{k, \eps}$ of $M_{\eps}$ satisfy 
\begin{align*}
&\nabla \vphi_{k,\eps}=O(1),\\
&\left|\left|\frac{\pa^2 \varphi_{k,\eps}}{\pa y^2}\right|\right|_{C^\alpha _y}+\left|\left|\frac{\pa^2 \varphi_{k,\eps}}{\pa x_j\pa y}\right|\right|_{C^\alpha _y}=O(1),\qquad 1\le j\le n\\
&\left|\left|\frac{\pa^3 \varphi_{k,\eps}}{\pa x_j\pa y^2}\right|\right|_{C^\alpha _y}+
\left|\left|\frac{\pa^3 \varphi_{k,\eps}}{\pa x_i\pa x_j\pa y}\right|\right|_{C^\alpha _y}=O(1),\qquad 1\le i, j\leq n
\end{align*}
for any {$0< \alpha <1$}, where $(x_1,\cdots,x_{n})$ {is} any local coordinate system of $M$.  
\end{theorem}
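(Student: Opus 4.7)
The plan is to bootstrap from the $L^2$-normalization $\|\vphi_{k,\eps}\|_{L^2(M_\eps)}^2=\eps$ up to the stated derivative bounds by combining a Moser iteration with the thin-domain Sobolev inequality~(\ref{kl-1}), the weighted Schauder estimates~(\ref{s-1})--(\ref{s-2}), and a careful use of the Neumann condition on the bottom face $B_I$.

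First I would establish the uniform bound $\|\vphi_{k,\eps}\|_{C^0}=O(1)$. The eigenvalue estimate $\mu_k(\eps)=O(1)$ from Lemma~\ref{lemq3} together with the $\eps^{-2/(n+1)}$ scaling in~(\ref{kl-1}) balances against the $\sqrt{\eps}$ in the $L^2$ normalization so that a standard Moser iteration on $\tilde{\Delta}\vphi_{k,\eps}=-\mu_k(\eps)\vphi_{k,\eps}$ produces an $\eps$-independent sup bound. With this in hand, the Schauder estimate~(\ref{s-1}) applied with $u_2=0$ yields $\|\vphi_{k,\eps}\|_{C^{2,\alpha}_\eps}\leq C(\|\vphi_{k,\eps}\|_{C^0}+\eps^2\mu_k(\eps)\|\vphi_{k,\eps}\|_{C^\alpha_\eps})$, and for small $\eps$ the H\"older term on the right is absorbed into the left, leaving $\|\vphi_{k,\eps}\|_{C^{2,\alpha}_\eps}=O(1)$.

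The weighted-norm bound alone is too weak: by itself it only gives $|\pa_y\vphi_{k,\eps}|=O(\eps^{-1})$ and $|\Delta_x\vphi_{k,\eps}|=O(\eps^{-2})$, whereas the theorem demands $O(1)$ bounds. To sharpen them I would exploit two observations in tandem. First, the Neumann condition $\pa_y\vphi_{k,\eps}(x,0)=0$ combined with $\pa_y\vphi_{k,\eps}(x,y)=\int_0^y\pa_y^2\vphi_{k,\eps}(x,s)\,ds$ converts any $L^\infty$ bound on $\pa_y^2\vphi_{k,\eps}$ into an $O(\eps)$ bound on $\pa_y\vphi_{k,\eps}$. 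Second, the eigenvalue equation rearranges as $\pa_y^2\vphi_{k,\eps}=-\Delta_x\vphi_{k,\eps}-\mu_k(\eps)\vphi_{k,\eps}$, so any pointwise bound on the tangential Laplacian passes directly to $\pa_y^2\vphi_{k,\eps}$ and then, via the first observation, to $\pa_y\vphi_{k,\eps}$. To bound $\Delta_x\vphi_{k,\eps}$, I would differentiate the eigenvalue equation in the $x$-variables and reapply~(\ref{s-2}): each $x$-derivative of $\vphi_{k,\eps}$ satisfies the same operator equation, with a Neumann condition that acquires lower-order terms of size $O(\eps)$ on the slanted face $B_{II}$; the side face $B_{III}$ causes no trouble because $f$ is constant near $\pa M$. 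Iterating this differentiation once more produces the bounds on the third-order mixed derivatives.

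The $C^\alpha_y$ seminorm bounds on $\pa_y^2\vphi_{k,\eps}$, $\pa_{x_j}\pa_y\vphi_{k,\eps}$, and their $x$-derivatives then follow because each of these quantities is controlled pointwise by tangential derivatives of $\vphi_{k,\eps}$, which inherit $y$-H\"older regularity from the refined Schauder bounds. The main obstacle I foresee is the bookkeeping of $\eps$-powers across the Moser, Schauder, and iterative differentiation steps: each differentiation in $x$ modifies the boundary terms on $B_{II}$, and one must verify that at every stage the resulting Schauder estimate closes with constants independent of $\eps$, rather than losing an $\eps$-factor that would preclude the $O(1)$ conclusions.
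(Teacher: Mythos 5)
Your proposal tries to bound the derivatives of $\vphi_{k,\eps}$ directly from the eigenvalue equation and the weighted Schauder estimates, whereas the paper's proof is fundamentally a perturbative argument: it constructs an explicit approximate eigenfunction
\[
U_k = \psi_k + \tfrac12 y^2\eta_k, \qquad \eta_k = -\nabla\phi\cdot\nabla\psi_k,
\]
built out of the known Bakry--\'Emery eigenfunction $\psi_k$, forms the error $w_{k,\eps}=\alpha_{k,k}\vphi_{k,\eps}+U_k+\sum_{j<k}\alpha_{k,j}\vphi_{j,\eps}$, shows via Moser iteration that $w_{k,\eps}=O(\eps^2)$, and only then runs the Schauder/maximum-principle bootstrap on $w_{k,\eps}$ and $r_{k,\eps}=\pa_y w_{k,\eps}$. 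The derivative bounds in Theorem~\ref{thm2} are then inherited from $U_k$ (whose $y$-derivatives are explicit and $O(1)$) plus the $O(\eps^2)$ remainder.

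The gap in your argument is a genuine circularity at the step where you claim to improve from the weighted Schauder bound to the $O(1)$ pointwise bounds. The estimate $\|\vphi_{k,\eps}\|_{C^{2,\alpha}_\eps}=O(1)$ unpacks, by the definition of the weighted norm, to $|\tilde\nabla^2\vphi_{k,\eps}|=O(\eps^{-2})$ and $|\tilde\nabla\vphi_{k,\eps}|=O(\eps^{-1})$ --- a loss that is intrinsic to the thin domain and cannot be removed by differentiating and reapplying~(\ref{s-2}), since each such differentiation only reproduces the same weighted estimate one order higher. Your proposed chain "$\Delta_x\vphi=O(1)\Rightarrow \pa_y^2\vphi=O(1)\Rightarrow\pa_y\vphi=O(\eps)$" requires the $O(1)$ bound on $\Delta_x\vphi_{k,\eps}$ as input, but you have no mechanism to produce it: the Schauder machinery gives $O(\eps^{-2})$ and your iteration never closes the two-order deficit. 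You flag this yourself as a "bookkeeping" concern at the end, but it is not bookkeeping --- it is the central difficulty, and it is exactly what forces the paper to introduce $U_k$. By subtracting off the leading $y$-independent behavior (and the explicit $\tfrac12 y^2\eta_k$ correction), the paper works with a quantity that is $O(\eps^2)$ in $C^0$; the weighted Schauder estimate then gives $|\tilde\nabla^2 w_{k,\eps}|=O(1)$ and $|\tilde\nabla w_{k,\eps}|=O(\eps)$, the maximum principle for $r_{k,\eps}$ closes with $O(\eps^2)$ because all boundary and source terms are already small, and the theorem follows. Without the approximate eigenfunction, the maximum-principle comparison functions you would need (of the form $v\pm Cy^2$) cannot be shown to be sub/superharmonic, because the bound on $v$ itself is what you are trying to prove.
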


As defined in \S\ref{sec3}, let $\{\mu_k \}_{k=0} ^{\infty}$ {and $\{ \psi_k \}_{k=0} ^{\infty}$} be respectively the eigenvalues and eigenfunctions for the drift Laplacian. Moreover, we assume that $\{ \psi_k \}_{k=0} ^{\infty}$ is an orthonormal basis {for} the space of $L^2$ functions of $M$  {with respect to the weighted measure $e^{-\phi} dV_g$.}

Define the functions
\[
\eta_k := -\nabla  \phi\nabla\psi_k, 
\]
{on $M$, and let
$$ U_k := \psi_k+\frac 12y^2\eta_k$$
be functions on $M_\epsilon$.}  By our definition of $f(x) = e^{- \phi(x)}$, $\eta_k$ are smooth up to the boundary.
Note that since $\psi_k$ is a Bakry-\'Emery eigenfunction, it satisfies 
\begin{equation} \label{eq:be-psik} \Delta \psi_k + \eta_k = - \mu_k \psi_k. \end{equation}
Since $\psi_k$ and $\eta_k$ are independent of $y$, 
\begin{equation}\label{eq:3}
\tilde \Delta U_k = -\mu_k \psi_k + \frac 12y^2\Delta\eta_k = - \mu_k U_k + \frac{1}{2} y^2 ( \Delta \eta_k + \mu_k \eta_k). 
\end{equation} 

We compute directly, 
\begin{equation} \label{eq:4}
\left.\frac{\pa U_k}{\pa n}\right|_{\pa M_\eps}=\left\{
\begin{array}{ll}
0& {\rm on } \,\,B_I\cup B_{{III}};\\
-  \frac{\displaystyle \eps^3f^2 \nabla f\nabla\eta_k}{\displaystyle  2(1+\eps^2|\nabla f|^2)^{1/2}}& {\rm on }\,\, B_{{II}}.
\end{array}
\right. \end{equation} 

Define
\[
w_{k,\eps}=\alpha_{k,k}\varphi_{k,\eps}+U_k+\sum_{j=0}^{k-1}\alpha_{k,j}\varphi_{j,\eps},
\]
where $\alpha_{k,j}$ are  defined such that $w_{k,\eps}\perp\varphi_{0,\eps}, \cdots, \varphi_{k,\eps}$ in $\cL^2 (M_{\eps})$.  

{Heuristically, by ~\eqref{eq:3} and~\eqref{eq:4}, $U_k$ should be very close to the eigenfunction $\varphi_{k,\eps}$, and the function $w_{k, \eps}$ should be approximately the difference between $U_k$ and $\vphi_{k, \eps}$.  In what follows, we prove that $w_{k, \eps}$ and its $y$ derivative (denoted $r_{k, \eps}$ in ~\eqref{sk-1-1} below) are indeed small, thereby rigorously demonstrating our intuitive heuristic.}

The following  inequality will be used repeatedly in the rest of the paper: let $\vphi$ be  a function on $M_\eps$. Then for any $p>0$, we have
\begin{align}
& \int_{B_{II}}|\vphi|^p\leq C\left(\frac 1\eps\int_{M_\eps}|\vphi|^p+p\sqrt{\int_{M_\eps}|\vphi|^{p-1}}\cdot
\sqrt{\int_{M_\eps}|\vphi|^{p-1}|\n\vphi|^2}\right).\label{5-7}
\end{align}

To prove~\eqref{5-7}, we observe that for any $0\leq y\leq\eps f(x)$, we have
\[
|\vphi|^p(x,\eps f(x))\leq|\vphi|^p(x,y)+p\int_0^{\eps f(x)}|\vphi|^{p-1}|\n\vphi| dy.
\]
Integrating over $M_\eps$ to both sides of the above equation and using the Cauchy-Schwarz inequality implies (\ref{5-7}).  
 
\begin{lemma}\label{lemm1}
For all $j < k$, $\alpha_{k,j}=O(\eps^2)$, and $\alpha_{k,k}=O(1)$.
\end{lemma}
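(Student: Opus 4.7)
The plan is to exploit the orthogonality conditions defining $w_{k,\eps}$. Taking the $L^2(M_\eps)$ inner product of $w_{k,\eps}$ with each $\varphi_{i,\eps}$ for $0\le i\le k$ and using the normalization $\int_{M_\eps}\varphi_{i,\eps}\varphi_{j,\eps}=\eps\delta_{ij}$ immediately reduces the problem to
\[
\eps\,\alpha_{k,i}=-\int_{M_\eps}U_k\,\varphi_{i,\eps},\qquad 0\le i\le k.
\]
Everything thus hinges on estimating these $k+1$ inner products.

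For the diagonal estimate $\alpha_{k,k}=O(1)$, a crude Cauchy--Schwarz bound suffices. Since $U_k=\psi_k+\tfrac12 y^2\eta_k$ with $\psi_k,\eta_k$ smooth on $M$ and $0\le y\le \eps f(x)$, a direct computation gives $\|U_k\|_{L^2(M_\eps)}^2=O(\eps)$; combined with $\|\varphi_{k,\eps}\|_{L^2(M_\eps)}=\sqrt{\eps}$, one gets $|\int_{M_\eps}U_k\varphi_{k,\eps}|=O(\eps)$, hence $\alpha_{k,k}=O(1)$.

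The substantive case is $i<k$, where the sharper bound $\int_{M_\eps}U_k\varphi_{i,\eps}=O(\eps^3)$ is required. The key is that $U_k$ has been constructed to be an approximate Neumann eigenfunction of $\tilde\Delta$ with eigenvalue $\mu_k$: equation (\ref{eq:3}) gives $\tilde\Delta U_k+\mu_k U_k=\tfrac12 y^2(\Delta\eta_k+\mu_k\eta_k)$, which is pointwise $O(\eps^2)$, and equation (\ref{eq:4}) gives $\partial U_k/\partial n=0$ on $B_I\cup B_{III}$ and $O(\eps^3)$ on $B_{II}$. Applying Green's identity to the pair $U_k,\varphi_{i,\eps}$, and using $\partial\varphi_{i,\eps}/\partial n=0$ everywhere and $-\tilde\Delta\varphi_{i,\eps}=\mu_i(\eps)\varphi_{i,\eps}$, yields
\[
(\mu_k-\mu_i(\eps))\int_{M_\eps}U_k\varphi_{i,\eps}=\tfrac12\int_{M_\eps}y^2(\Delta\eta_k+\mu_k\eta_k)\varphi_{i,\eps}-\int_{B_{II}}\varphi_{i,\eps}\,\frac{\partial U_k}{\partial n}.
\]
The inductive hypothesis from Lemma \ref{lemq3} ensures $\mu_i(\eps)\to\mu_i$, so under the simplicity assumption the prefactor $\mu_k-\mu_i(\eps)$ is bounded away from zero.

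The main obstacle is the $B_{II}$ boundary term: we have $\partial U_k/\partial n=O(\eps^3)$ in hand, but still need to control $\int_{B_{II}}|\varphi_{i,\eps}|$ uniformly in $\eps$. This is precisely the role of the trace-type inequality (\ref{5-7}): applied with $p=1$, together with the bounds $\|\varphi_{i,\eps}\|_{L^1(M_\eps)}\le \sqrt{\mathrm{Vol}(M_\eps)}\|\varphi_{i,\eps}\|_{L^2(M_\eps)}=O(\eps)$ and $\|\n\varphi_{i,\eps}\|_{L^2(M_\eps)}^2=\mu_i(\eps)\eps=O(\eps)$, it gives $\int_{B_{II}}|\varphi_{i,\eps}|=O(1)$. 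Hence the boundary contribution is $O(\eps^3)\cdot O(1)=O(\eps^3)$, while the volume integral on the right is controlled by $\eps^2\|\Delta\eta_k+\mu_k\eta_k\|_\infty\cdot\|\varphi_{i,\eps}\|_{L^1(M_\eps)}=O(\eps^3)$. Combining these yields $\int_{M_\eps}U_k\varphi_{i,\eps}=O(\eps^3)$ and therefore $\alpha_{k,i}=O(\eps^2)$.
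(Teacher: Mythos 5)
Your proposal is correct and follows essentially the same route as the paper's proof: reduce $\alpha_{k,i}$ to the inner products $\int_{M_\eps}U_k\varphi_{i,\eps}$, apply Green's identity (integration by parts) to $U_k$ and $\varphi_{i,\eps}$ using equations (\ref{eq:3})--(\ref{eq:4}), bound the $B_{II}$ boundary term via (\ref{5-7}) with $p=1$, and divide by the spectral gap $\mu_k-\mu_i(\eps)$ which is bounded away from zero by the generic simplicity assumption and Lemma~\ref{lemq3}. The only cosmetic differences are that you make the orthogonality-to-inner-product reduction fully explicit and give a slightly more detailed Cauchy--Schwarz justification for $\alpha_{k,k}=O(1)$, whereas the paper just asserts it from the definition; your volume-term bound also uses $L^\infty\times L^1$ rather than the paper's $L^2\times L^2$, but both yield $O(\eps^3)$.
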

\begin{proof} Integrating  by parts, {(\ref{eq:3}) and (\ref{eq:4})} give
\begin{align*}
&
\mu_{j} {(\eps)} \int_{M_\eps} \varphi_{j, \eps} U_k = -\int_{M_\eps} \tilde{\Delta} \varphi_{j, \eps} U_k = \int_{\pa M_\eps} \varphi_{j, \eps} \frac{\pa U_k}{\pa n} - \int_{M_\eps} \varphi_{j, \eps} \tilde{\Delta} U_k \\
&=\int_{\pa M_\eps} \varphi_{j, \eps} \frac{\pa U_k}{\pa n} +\mu_k \int_{M_\eps} \varphi_{j, \eps}  U_k - \frac 12\int_{M_\eps} \varphi_{j, \eps} y^2( \Delta \eta_k+\mu_k\eta_k).
\end{align*}
Thus by~\eqref{eq:4} again, we have 
\begin{equation} \label{eq:mje}
(\mu_{j}(\eps) - \mu_k) \int_{M_\eps} \varphi_{j, \eps} U_k  = {-} \int_{B_{II}} \varphi_{j, \eps} \frac{\pa U_k}{\pa n} - \frac 12\int_{M_\eps} \varphi_{j, \eps} y^2 (\Delta\eta_k+\mu_k\eta_k). 
\end{equation} 
We clearly have 
\begin{equation} \label{eq:11} \left| \int_{M_\eps} \varphi_{j, \eps} y^2 (\Delta\eta_k+\mu_k\eta_k) \right| \leq
C\sqrt{\int_{M_\eps} y^4}\cdot\sqrt{\int_{M_\eps}|\vphi_{j,\eps}|^2}
= O(\eps^3).\end{equation} 
Using~\eqref{5-7} for $p=1$, we have
$$\left|  \int_{B_{II}} \varphi_{j, \eps} \frac{\pa U_k}{\pa n} \right|
\leq C\eps^3.$$

By the generic assumption of the manifold  $M$ and Lemma {~\ref{lemq3}}, $\mu_j (\eps) = \mu_j + O(\eps) < \mu_k$ for all $j < k$ for $\eps$ sufficiently small.  Thus, dividing by $(\mu_{j}( \eps) - \mu_k)$ in (\ref{eq:mje}) gives 
\begin{equation} \label{eq:alphajk} \int_{M_\eps} \varphi_{j, \eps} U_k = O(\eps^3) \implies \alpha_{k,j} = O(\eps^2). \end{equation} 
That $\alpha_{k,k}$ is bounded follows from its definition.
\end{proof} 

A straightforward computation gives 
$$F_1 := \tilde\Delta w_{k,\eps}+\mu_k(\eps)w_{k,\eps} $$
\begin{equation} \label{sk-1} 
=(\mu_k(\eps)-\mu_k)U_k+\frac 12y^2(\Delta\eta_k+\mu_k\eta_k)+\sum_{j=0}^{k-1}\alpha_{k,j}(\mu_k(\eps)-\mu_j(\eps))\varphi_{j,\eps}
\end{equation} 
{with the boundary conditions} 
\begin{equation}\label{sk-1-1}
\frac{\pa w_{k,\eps}}{\pa n}=
\left\{
\begin{array}{ll}
0& \text{ on } B_I\cup B_{III}\\
\frac{\pa U_k}{\pa n}=O(\eps^3) &\text{ on } B_{II}
\end{array}
\right..
\end{equation}
Let 
$$r_{k, \eps} = \frac{\pa w_{k,\eps}}{\pa y}.$$
Then 
$$F_2 :=  \tilde\Delta r_{k,\eps}+\mu_k(\eps)r_{k,\eps} $$
\begin{equation}\label{sk-2}
=(\mu_k(\eps)-\mu_k)y\eta_k+y(\Delta\eta_k+\mu_k\eta_k)+\sum_{j=0}^{k-1}\alpha_{k,j}(\mu_k(\eps)-\mu_j(\eps))\frac{\pa\varphi_{j,\eps}}{\pa y}
\end{equation} 
with the boundary conditions
\begin{equation}\label{sk-2-1}
\left\{
\begin{array}{ll}
r_{k,\eps}=0 & \text{ on } B_{I}\\
r_{k,\eps}=\eps\n f(x)\n w_{k,\eps}(x,\eps f(x)) {+ \mathcal{O}(\eps^3)}& \text{ on } B_{II}\\
\frac{\pa r_{k, \eps}}{\pa n} =0 & \text{ on } B_{III}
\end{array}
\right..
\end{equation}

Inductively, we assume that the Theorem~\ref{thm2} is true for $j\leq k-1$. Then by Lemma {~\ref{lemq3}} and Lemma~\ref{lemm1}, we have
\begin{equation} \label{sk-3} ||F_1||_{C^\alpha_\eps}=O({B_k}) 
\textrm{ and } ||F_2||_{C^\alpha_\eps}=O(\eps){,}\end{equation} 
{where}
\[
B_k=\eps^2+\sqrt{\eps A_k}.
\]

\begin{lemma} With the above notations, we have
$$ ||w_{k,\eps}||_{L^2({M_\eps})} =O(\eps^{1/2}B_k) \textrm{ and }  ||\tilde\nabla w_{k,\eps}||_{L^2({M_\eps})} =O(\eps^{1/2}B_k).$$
\end{lemma}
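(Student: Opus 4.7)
Write $a:=\|w_{k,\eps}\|_{L^2(M_\eps)}$ and $b:=\|\tilde\n w_{k,\eps}\|_{L^2(M_\eps)}$. The plan is to play the spectral gap between $\mu_k(\eps)$ and $\mu_{k+1}(\eps)$ against the source $F_1$ and the inhomogeneous Neumann data on $B_{II}$, and then bootstrap to $b$. Because $w_{k,\eps}$ is constructed to be $L^2(M_\eps)$-orthogonal to $\vphi_{0,\eps},\ldots,\vphi_{k,\eps}$, the Neumann variational principle on $M_\eps$ gives $b^2\geq \mu_{k+1}(\eps)\,a^2$. On the other hand, pairing (\ref{sk-1}) with $w_{k,\eps}$ and integrating by parts yields the complementary identity
$$ b^2 - \mu_k(\eps)\,a^2 \;=\; \int_{\pa M_\eps} w_{k,\eps}\,\frac{\pa w_{k,\eps}}{\pa n} \;-\; \int_{M_\eps} w_{k,\eps}\,F_1, $$
so that
$$ \bigl(\mu_{k+1}(\eps)-\mu_k(\eps)\bigr)\,a^2 \;\leq\; \Bigl|\int_{\pa M_\eps} w_{k,\eps}\,\frac{\pa w_{k,\eps}}{\pa n}\Bigr| + \Bigl|\int_{M_\eps} w_{k,\eps}\,F_1\Bigr|. \quad (\star) $$
Under the genericity assumption in force since \S3 and the inductive convergence $\mu_j(\eps)\to\mu_j$, the coefficient $\mu_{k+1}(\eps)-\mu_k(\eps)$ is bounded below by a fixed positive constant for small $\eps$.

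I estimate the right-hand side of $(\star)$ using the two inputs already in hand: the bound $\|F_1\|_{C^\alpha_\eps}=O(B_k)$ from (\ref{sk-3}), and the boundary data (\ref{sk-1-1}), which makes $\pa w_{k,\eps}/\pa n$ vanish on $B_I\cup B_{III}$ and of size $O(\eps^3)$ on $B_{II}$. Cauchy--Schwarz together with $\mathrm{Vol}(M_\eps)=O(\eps)$ gives $|\int w_{k,\eps}F_1|=O(\eps^{1/2}B_k)\,a$. For the boundary piece, the trace-type inequality (\ref{5-7}) with $p=1$, combined with Cauchy--Schwarz on $\int_{M_\eps}|w_{k,\eps}|$, yields $\int_{B_{II}}|w_{k,\eps}|\leq C(a/\sqrt\eps+\sqrt\eps\,b)$, so the boundary contribution is of size $O(\eps^{5/2})\,a+O(\eps^{7/2})\,b$. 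Since $B_k\geq\eps^2$, the $\eps^{5/2}a$ term is dominated by $\eps^{1/2}B_k\,a$, and $(\star)$ reduces to
$$ c\,a^2 \;\leq\; C\,\eps^{1/2}B_k\,a \;+\; C\,\eps^{7/2}\,b, \qquad (\star\star) $$
for some fixed $c>0$.

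It remains to decouple $a$ and $b$. The same integration-by-parts identity also gives the upper bound $b^2\leq \mu_k(\eps)\,a^2+C\,\eps^{1/2}B_k\,a+C\,\eps^{7/2}\,b$; solving this quadratic in $b$ and applying AM--GM to the cross term $\sqrt{\eps^{1/2}B_k\,a}$ yields $b\leq C_1\,a+C_2\,\eps^{1/2}B_k$. Substituting into $(\star\star)$ and once more using $\eps^{7/2}\leq \eps^{1/2}B_k$ reduces it to the quadratic inequality $c\,a^2\leq C'\,\eps^{1/2}B_k\,a+C''\,\eps\,B_k^2$, from which $a=O(\eps^{1/2}B_k)$; feeding this back into the bound for $b$ then gives $b=O(\eps^{1/2}B_k)$. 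The delicate point in this scheme is the non-zero Neumann data on $B_{II}$: absent the sharp trace inequality (\ref{5-7}), the boundary term in $(\star)$ could not be controlled by the interior $L^2$-norms of $w_{k,\eps}$ and $\tilde\n w_{k,\eps}$ at a scale compatible with the spectral-gap prefactor.
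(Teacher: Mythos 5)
Your proof is correct and follows essentially the same route as the paper's: pair \eqref{sk-1} with $w_{k,\eps}$, integrate by parts, control the source term via $\|F_1\|_{C^\alpha_\eps}=O(B_k)$ and $\mathrm{Vol}(M_\eps)=\eps$, control the $B_{II}$ boundary term via the trace inequality \eqref{5-7}, and close the estimate with the Poincar\'e inequality $\mu_{k+1}(\eps)\|w_{k,\eps}\|^2_{L^2}\le\|\tilde\n w_{k,\eps}\|^2_{L^2}$ together with the generic spectral gap $\mu_{k+1}(\eps)-\mu_k(\eps)\ge c>0$. You spell out the decoupling of $a$ and $b$ (the chain $b\le C_1a+C_2\eps^{1/2}B_k$, then the quadratic in $a$) more explicitly than the paper, which essentially asserts it; this is a minor difference in exposition rather than in substance.
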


\begin{proof}   Multiplying both sides of ~\eqref{sk-1} by $w_{k,\eps}$ and integrating by parts, using~\eqref{sk-3}, we get
\begin{align}\label{poiu}
\begin{split}
&
||\n w_{k,\eps}||^2_{L^2({M_\eps})} -\mu_{k}(\eps) ||w_{k,\eps}||^2_{L^2({M_\eps})} \\
&\leq CB_k\cdot||w_{k,\eps}||_{L^1(M_\eps)}+\left|\int_{B_{II}}w_{k,\eps}\frac{\pa w_{k,\eps}}{\pa n}\right|.
\end{split}
\end{align}
{By} ~\eqref{5-7}, we have
$$
\left|\int_{B_{II}}w_{k,\eps}\frac{\pa w_{k,\eps}}{\pa n}\right|=\left|\int_{B_{II}}w_{k,\eps}\frac{\pa U_k}{\pa n}\right|  \leq C \eps^{5/2}( ||w_{k,\eps}||_{L^2({M_\eps})}+ ||\n w_{k,\eps}||_{L^2({M_\eps})}).$$
Thus we have
\[
||\n w||^2_{L^2({M_\eps})} -\mu_{k}(\eps) ||w||^2_{L^2({M_\eps})}\leq C\eps^{5/2}( ||w_{k,\eps}||_{L^2({M_\eps})}+ ||\n w_{k,\eps}||_{L^2({M_\eps})}).
\]

By the Poincar\'e inequality, we have
\[
\mu_{k+1}(\eps)\int_{M_\eps}|w_{k,\eps}|^2\leq
\int_{M_\eps}|\n w_{k,\eps}|^2.
\]
The lemma is proved since by our ``generic'' assumption, there is a gap between $\mu_{k+1}(\eps)$ and $\mu_k(\eps)$ that is independent of $\eps$.
\end{proof} 

Using the above lemma we shall complete the proof of Theorem~\ref{thm1}.\\

{\bf Proof of Theorem~\ref{thm1}.} Since $B_k=O(\eps)$, by the above lemma, $||w_{k,\eps}||_{L^2(M_\eps)}=O(\eps^{3/2})$. It follows that
\[
A_k=\sum_{j=1}^k\int_{M_\eps}\left|\frac{\pa \vphi_{k,\eps}}{\pa y}\right|^2=O(\eps^3),
\]
and thus Theorem~\ref{thm1} follows from Lemma~\ref{lemq3}.
\qed

\begin{cor}\label{7-1}
With  the above notations, we have
$$ ||w_{k,\eps}||_{L^2({M_\eps})} =O(\eps^{5/2}) \textrm{ and } ||\tilde \nabla w_{k,\eps}||_{L^2({M_\eps})} =O(\eps^{5/2}).$$
\end{cor}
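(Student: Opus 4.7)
The plan is to observe that Corollary \ref{7-1} is essentially an immediate bootstrap of the previous lemma using the sharpened estimate on $A_k$ that falls out of the just-completed proof of Theorem~\ref{thm1}.

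First I would invoke the proof of Theorem~\ref{thm1} that directly preceded the corollary: it established $A_k = O(\eps^3)$ by feeding the bound $||w_{k,\eps}||_{L^2(M_\eps)} = O(\eps^{3/2})$ (coming from the lemma with the crude $B_k = O(\eps)$) back into the definition of $A_k$ via $\pa w_{k,\eps}/\pa y$. With Theorem~\ref{thm1} now available, this improved value of $A_k$ can be recycled into the quantity $B_k = \eps^2 + \sqrt{\eps A_k}$, yielding the sharper bound
\[
B_k = \eps^2 + \sqrt{\eps \cdot O(\eps^3)} = O(\eps^2).
\]

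Next, I would substitute this improved $B_k$ into the lemma immediately preceding the corollary (the one that gives $||w_{k,\eps}||_{L^2(M_\eps)} = O(\eps^{1/2} B_k)$ and $||\tilde\nabla w_{k,\eps}||_{L^2(M_\eps)} = O(\eps^{1/2} B_k)$). This yields at once
\[
||w_{k,\eps}||_{L^2(M_\eps)} = O(\eps^{1/2} \cdot \eps^2) = O(\eps^{5/2}),
\]
and similarly for the gradient, which is exactly the statement of the corollary.

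There is essentially no obstacle here: the entire content is the self-improving nature of the scheme. The only thing to double-check is that the inductive hypothesis used in the previous lemma (namely $||F_1||_{C^\alpha_\eps} = O(B_k)$ and $||F_2||_{C^\alpha_\eps} = O(\eps)$, together with the gap assumption separating $\mu_k(\eps)$ and $\mu_{k+1}(\eps)$) is not affected when we merely shrink $B_k$ from $O(\eps)$ to $O(\eps^2)$; this is clear since the estimates were stated with $B_k$ as a formal parameter. Hence the bootstrap is legitimate and the corollary follows.
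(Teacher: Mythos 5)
Your proof is correct and follows exactly the bootstrap route the paper intends: the corollary has no written proof in the source precisely because, once $A_k = O(\eps^3)$ is extracted from the proof of Theorem~\ref{thm1}, feeding this back into $B_k = \eps^2 + \sqrt{\eps A_k} = O(\eps^2)$ and then into the preceding lemma gives the $\eps^{5/2}$ bounds immediately.
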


\qed

Now we work towards the proof of Theorem~\ref{thm2}.

\begin{lemma}$w_{k,\eps}=O(\eps^2)$
\end{lemma}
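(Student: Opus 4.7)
\medskip

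The goal is to upgrade the $L^2$ bounds of Corollary~\ref{7-1}, namely $\|w_{k,\eps}\|_{L^2(M_\eps)}+\|\tilde\nabla w_{k,\eps}\|_{L^2(M_\eps)}=O(\eps^{5/2})$, to the sup-norm estimate $\|w_{k,\eps}\|_{L^\infty(M_\eps)}=O(\eps^2)$. The first observation is that, since Theorem~\ref{thm1} has just been established, we know $A_k=O(\eps^3)$ and hence $B_k=O(\eps^2)$; plugging back into (\ref{sk-3}) sharpens the right-hand side to $\|F_1\|_{C^\alpha_\eps}=O(\eps^2)$, while the Neumann data $\partial w_{k,\eps}/\partial n$ in (\ref{sk-1-1}) vanishes on $B_I\cup B_{III}$ and is of size $O(\eps^3)$ on $B_{II}$.

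The main body of the proof is a Moser iteration. For each $q\geq 1$, multiply $\tilde\Delta w_{k,\eps}+\mu_k(\eps)w_{k,\eps}=F_1$ by $|w_{k,\eps}|^{2q-2}w_{k,\eps}$ and integrate by parts over $M_\eps$. The interior terms give a Caccioppoli-type inequality for $|w_{k,\eps}|^q$, while the boundary flux on $B_{II}$ contributes $O(\eps^3)\int_{B_{II}}|w_{k,\eps}|^{2q-1}$, which we bound using the trace-type inequality (\ref{5-7}) and absorb into the $H^1$ term on the left. Combining with the anisotropic Sobolev inequality (\ref{kl-1})—which upgrades the integrability exponent by $\sigma=(n+1)/(n-1)$ at the cost of a factor $\eps^{-2/(n+1)}$—gives one Moser step. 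Iterating with $q_m=\sigma^m$ and summing the geometric series $\sum_m\sigma^{-m}<\infty$ in the exponents of $\eps$ yields a bound of the form
\[
\|w_{k,\eps}\|_{L^\infty(M_\eps)}\;\leq\;C\eps^{-\theta}\Bigl(\|w_{k,\eps}\|_{L^2(M_\eps)}+\eps^2\|F_1\|_{L^\infty}+\eps^3\Bigr)
\]
for some $\theta<1/2$ depending only on $n$. Inserting $\|w_{k,\eps}\|_{L^2}=O(\eps^{5/2})$ and $\|F_1\|_{L^\infty}=O(\eps^2)$ produces $\|w_{k,\eps}\|_{L^\infty}=O(\eps^{5/2-\theta})=O(\eps^2)$.

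The principal technical obstacle is the careful bookkeeping of $\eps$-powers across the Moser iteration, where the Sobolev constant $\eps^{-2/(n+1)}$ of (\ref{kl-1}) and the $\eps^{-1}$ weight appearing in the boundary trace inequality (\ref{5-7}) must be summed over all iterations and balanced against the $\eps^{5/2}$ from the $L^2$ estimate; my expectation is that, because these $\eps$-losses form a convergent geometric series while the exponent on $F_1$ is favorable, everything combines so the final bound is just strong enough to give $O(\eps^2)$ but no stronger—this tightness is reassuringly consistent with the scaling $\|w\|_{L^\infty}\gtrsim\eps^{-1/2}\|w\|_{L^2}$ forced by the $\eps$-thin geometry of $M_\eps$. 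An alternative finish, once the $L^\infty$ bound is in hand, is to feed it back into the weighted Schauder estimate (\ref{s-1}) to obtain the $C^{2,\alpha}_\eps$ control needed for Theorem~\ref{thm2}.
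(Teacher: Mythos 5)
Your proposal is correct and follows essentially the same approach as the paper's own proof: Moser iteration driven by the $\eps$-weighted Sobolev inequality (\ref{kl-1}), with the $B_{II}$ boundary flux of size $O(\eps^3)$ absorbed via the trace inequality (\ref{5-7}), and the iteration seeded by the $L^2$ bound $\|w_{k,\eps}\|_{L^2(M_\eps)}=O(\eps^{5/2})$ from Corollary~\ref{7-1} together with $\|F_1\|=O(\eps^2)$ after Theorem~\ref{thm1}. The only slight imprecision is the claim that the net $\eps$-loss exponent $\theta$ across the iteration is strictly less than $1/2$; summing $a_k=2\bigl(\tfrac{n+1}{n-1}\bigr)^k$ as in the paper gives $\frac{2}{n+1}\sum_k 1/a_k=\frac{1}{2}$, so the loss is exactly $\eps^{-1/2}$ and the final bound is exactly $O(\eps^{5/2-1/2})=O(\eps^2)$, not better — but this does not affect the conclusion.
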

\begin{proof}  {We prove the lemma using Moser iteration.}  By ~\eqref{sk-3} {and Theorem ~\ref{thm1},} $F_1=O(\eps^{2})$ is a bounded function.  Using integration by parts, we get
\[
p\int_{M_\eps}|w_{k,\eps}|^{p-1}|\nabla w{_{k, \eps}}|^2\leq\int_{M_\eps}|F_1||w_{k,\eps}|^p+\mu_k(\eps)\int_{M_\eps}|w_{k,\eps}|^{p+1}
+\int_{B_{II}}|w_{k,\eps}|^p\left|\frac{\pa w_{k,\eps}}{\pa n}\right|.
\]
By~\eqref{eq:4}, we have
\[
\int_{B_{II}}|w_{k,\eps}|^p\left|\frac{\pa w_{k,\eps}}{\pa n}\right|\leq C\eps^3\int_{B_{II}}|w_{k,\eps}|^p.
\]
By~\eqref{5-7}, we have
\[
\int_{B_{II}}|w_{k,\eps}|^p\leq 
\left(\frac 1\eps\int_{M_\eps}|w_{k,\eps}|^p+p\sqrt{\int_{M_\eps}|w_{k,\eps}|^{p-1}}\cdot
\sqrt{\int_{M_\eps}|w_{k,\eps}|^{p-1}|\n w_{k,\eps}|^2}\right).
\]
It follows that 
\[
\int_{B_{II}}|w_{k,\eps}|^p\left|\frac{\pa w_{k,\eps}}{\pa n}\right|\leq \frac 14 p\int_{M_\eps}|w_{k,\eps}|^{p-1}|\nabla w_{k,\eps}|^2
+\eps^6 p\int_{M_\eps} |w_{k,\eps}|^{p-1}+\eps^2\int_{M_\eps}|w_{k,\eps}|^p.
\]
By the Young's inequality, we have
\[
\eps^2|w_{k,\eps}|^p+\eps^6|w_{k,\eps}|^{p-1}\leq |w_{k,\eps}|^{p+1}+\eps^{2(p+1)}.
\]
Thus we have
\[
\int_{B_{II}}\left|w_{k,\eps}\right|^p\left|\frac{\pa w_{k,\eps}}{\pa n}\right|\leq
\frac p4 \int_{M_\eps}|w_{k,\eps}|^{p-1}|\nabla w_{k,\eps}|^2+Cp\left( {\left(\int_{M_\eps}|w_{k,\eps}|^{p+1}\right)}+\eps^{2(p+1)+1}\right),
\]
from which  we have
\[
p\int_{M_\eps}|w_{k,\eps}|^{p-1}\left|\nabla w_{k,\eps}\right|^2\leq Cp\left ({\left( \int_{M_\eps}|w_{k,\eps}|^{p+1}\right)}+\eps^{2(p+1)+1}\right).
\]
Using the above inequality and the Sobolev inequality~\eqref{kl-1}, we have
\[
\left(\int_{M_\eps}|w_{k,\eps}|^{(p+1)\frac{n+1}{n-1}}\right)^{\frac{n-1}{n+1}}\leq Cp^2\eps^{-\frac{2}{n+1}} \left(\int_{M_\eps}|w_{k,\eps}|^{p+1}+\eps^{2(p+1)+1}\right).
\]
We thus have
\[
\left(\int_{M_\eps}|w_{k,\eps}|^{(p+1)\frac{n+1}{n-1}}+\eps^{2(p+1)\frac{n+1}{n-1}+1}\right)^{\frac{n-1}{n+1}}\leq Cp^2\eps^{-\frac{2}{n+1}} \left(\int_{M_\eps}|w_{k,\eps}|^{p+1}+\eps^{2(p+1)+1}\right).
\]

Let 
\[
a_k=2\left(\frac{n+1}{n-1}\right)^k
\]
for $k\geq 0$.
Let
\[
b_k=\left(\int_{M_\eps}|w_{k,\eps}|^{a_k}+\eps^{2(a_k)+1}\right)^{1/a_k}.
\]
Then we have
\[
b_{k+1}\leq (C(a_k)^2\eps^{-\frac{2}{n+1}})^{1/a_k} b_k.
\]
The standard iteration process shows that
\[
|| w_{k,\eps}||_{C^0}\leq C \eps^{-\frac{1}{n+1}\sum 1/a_k}b_0\leq C\eps^{2}.
\]
\end{proof}

\begin{lemma} \label{lemq9}
$r_{k,\eps}=O(\eps^2)$.
\end{lemma}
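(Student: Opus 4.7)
The strategy is to bootstrap from the preceding lemma $w_{k,\eps} = O(\eps^2)$ to the desired bound $r_{k,\eps} = \partial_y w_{k,\eps} = O(\eps^2)$, by combining a weighted Schauder estimate for $w_{k,\eps}$ with an energy estimate and Moser iteration applied directly to $r_{k,\eps}$.

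\textbf{Step 1 (Schauder on $w_{k,\eps}$).} First, I would apply the global Schauder estimate (\ref{s-1}) to $w_{k,\eps}$, which satisfies the Neumann-type boundary conditions (\ref{sk-1-1}) (zero on $B_I \cup B_{III}$, of size $O(\eps^3)$ on $B_{II}$). The preceding lemma gives $\|w_{k,\eps}\|_{C^0} = O(\eps^2)$; Theorem~\ref{thm1} and Lemma~\ref{lemm1} imply $B_k = O(\eps^2)$, hence $\|F_1\|_{C^\alpha_\eps} = O(\eps^2)$ by (\ref{sk-3}). Consequently $\|w_{k,\eps}\|_{C^{2,\alpha}_\eps} = O(\eps^2)$, and in particular the definition of the weighted norm forces $\|\nabla w_{k,\eps}\|_{C^0} = O(\eps)$. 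Feeding this into the second line of (\ref{sk-2-1}), the inhomogeneous Dirichlet data for $r_{k,\eps}$ on $B_{II}$ becomes $O(\eps^2)$, while $r_{k,\eps}\equiv 0$ on $B_I$ and $\partial_n r_{k,\eps} = 0$ on $B_{III}$.

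\textbf{Step 2 ($L^2$ estimate for $r_{k,\eps}$).} Next I would establish $\|r_{k,\eps}\|_{L^2(M_\eps)} = O(\eps^{5/2})$, the analogue of Corollary~\ref{7-1}. The key observation is that $F_2$ in (\ref{sk-2}) carries an overall factor of $y$ in its two principal terms and $\alpha_{k,j}\partial_y \varphi_{j,\eps} = O(\eps^3)$ in the remaining sum (using the inductive hypothesis that $\|\partial_y^2 \varphi_{j,\eps}\|_{C^\alpha_y} = O(1)$ together with $\partial_y \varphi_{j,\eps}|_{B_I} = 0$), so that $\|F_2\|_{L^2(M_\eps)} = O(\eps^{3/2})$. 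Multiplying (\ref{sk-2}) by $r_{k,\eps}$, integrating by parts, and controlling the $B_{II}$ boundary contribution via (\ref{5-7}) and the $O(\eps^2)$ Dirichlet data, I obtain the desired $L^2$ bound. Because $r_{k,\eps}$ vanishes on $B_I$, a Dirichlet Poincaré inequality in the $y$-variable, $\int_{M_\eps} r_{k,\eps}^2 \le C\eps^2 \int_{M_\eps}|\tilde\n r_{k,\eps}|^2$, absorbs the lower-order term $\mu_k(\eps)\int r_{k,\eps}^2$ for $\eps$ small.

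\textbf{Step 3 (Moser iteration).} Finally, I would adapt the Moser iteration exactly as in the proof of the previous lemma. Multiplying (\ref{sk-2}) by $|r_{k,\eps}|^{p-1}r_{k,\eps}$ and integrating, the boundary term $\int_{B_{II}}|r_{k,\eps}|^p|\partial_n r_{k,\eps}|$ is treated through (\ref{5-7}), Young's inequality is used to split the resulting terms at the natural $\eps^{2(p+1)}$ scale, and the Sobolev inequality (\ref{kl-1}) iterates $L^p \to L^{p(n+1)/(n-1)}$. Starting from $\|r_{k,\eps}\|_{L^2} = O(\eps^{5/2})$ gives $\|r_{k,\eps}\|_{C^0} = O(\eps^2)$.

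\textbf{Main obstacle.} The principal difficulty is the nonvanishing Dirichlet data on $B_{II}$: both the energy estimate and the Moser iteration must absorb boundary integrals such as $\int_{B_{II}} |r_{k,\eps}|^p |\partial_n r_{k,\eps}|$ without losing the correct $\eps^{2(p+1)}$ order at each step. This is precisely what (\ref{5-7}) combined with Young's inequality accomplished in the preceding lemma, and the same bookkeeping, sharpened to account for the $O(\eps^2)$ (rather than zero) Dirichlet data on $B_{II}$, should suffice here.
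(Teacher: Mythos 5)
Your Step 1 coincides with the paper's: both apply the weighted Schauder estimate \eqref{s-1} to $w_{k,\eps}$ to obtain $\|w_{k,\eps}\|_{C^{2,\alpha}_\eps}=O(\eps^2)$, hence $|\tilde\n w_{k,\eps}|=O(\eps)$ and $|\tilde\n^2 w_{k,\eps}|=O(1)$, and then read off $r_{k,\eps}=O(\eps^2)$ on $B_{II}$ from \eqref{sk-2-1}. From there the routes diverge. The paper itself remarks that Moser iteration would work but instead uses a barrier: since $F_2$ is bounded, $\tilde\Delta(r_{k,\eps}\pm Cy^2)$ has a sign for $C$ large, and $r_{k,\eps}\pm Cy^2$ is controlled on $B_I$ (vanishes), on $B_{III}$ (Neumann), and on $B_{II}$ ($O(\eps^2)$), so the maximum principle finishes in two lines. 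Your Steps 2--3 are the Moser route.

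Two comments on your Steps 2--3. First, Step 2 is circuitous: the bound $\|r_{k,\eps}\|_{L^2(M_\eps)}=O(\eps^{5/2})$ is immediate from Corollary~\ref{7-1}, because $r_{k,\eps}=\partial_y w_{k,\eps}$ is a component of $\tilde\n w_{k,\eps}$; there is no need to re-derive it via an energy estimate. Moreover, the energy estimate as you describe it has an unaddressed boundary term: multiplying by $r_{k,\eps}$ and integrating by parts produces $\int_{B_{II}} r_{k,\eps}\,\partial_n r_{k,\eps}$, and on $B_{II}$ one has $\partial_n r_{k,\eps}=O(1)$ (not small), so \eqref{5-7} alone, which controls $\int_{B_{II}}|\varphi|^p$ but not $\partial_n r_{k,\eps}$, does not close the estimate at the $O(\eps^{5/2})$ level. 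Second, and more substantively, Step 3 has a genuine gap. In the Moser iteration for $w_{k,\eps}$, the boundary contribution over $B_{II}$ is $\int_{B_{II}}|w_{k,\eps}|^p|\partial_n w_{k,\eps}|$ and carries the crucial smallness $|\partial_n w_{k,\eps}|=O(\eps^3)$ from \eqref{eq:4}; this $\eps^3$ factor is exactly what makes the Young-inequality splitting land at the $\eps^{2(p+1)}$ scale. For $r_{k,\eps}$ the boundary condition on $B_{II}$ is Dirichlet, not Neumann, and after testing with $|r_{k,\eps}|^{p-1}r_{k,\eps}$ the boundary integral is $\int_{B_{II}}|r_{k,\eps}|^{p-1}r_{k,\eps}\,\partial_n r_{k,\eps}$ with $\partial_n r_{k,\eps}=O(1)$ only; the absorption into the gradient term then costs terms of size $\int_{M_\eps}|r_{k,\eps}|^{p-1}$ with no compensating power of $\eps$, and the iteration does not close. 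The correct treatment of nonhomogeneous Dirichlet data in Moser iteration is to test with truncations $(|r_{k,\eps}|-K)_+^p$ for $K\sim\eps^2$ larger than $\sup_{B_I\cup B_{II}}|r_{k,\eps}|$, so the $B_I\cup B_{II}$ boundary terms vanish identically; you gesture at this (``sharpened to account for the $O(\eps^2)$ Dirichlet data'') but do not actually carry it out, and as written your Step 3 transplants the Neumann-case estimate to a Dirichlet-case boundary where it fails. This is precisely the awkwardness the authors avoid by choosing the barrier argument.
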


\begin{proof} 
We could run Moser iteration again to get the estimate. However, the following proof using the maximum principle seems to be simpler.  Using~\eqref{s-1}, ~\eqref{sk-3}, and the boundary conditions~\eqref{sk-2-1}, we have
\[
||w_{k,\eps}||_{C^{2,\alpha}_\eps}\leq\eps^2,
\]
which implies that
\[
|\tilde\n^2 w_{k,\eps}|\leq C,\qquad |\tilde\n w_{k,\eps}|\leq C\eps
\]
by the interpolation inequalities.
For $C>0$ large, by~\eqref{sk-3}, we have
\[
\tilde\Delta (r_{k,\eps}+Cy^2)>0.
\]
Since $r_{k,\eps}=0$ on $B_I$, $\frac{\pa r_{k,\eps}}{\pa n}=0$ on $B_{III}$. By the maximum principle, we only need to estimate the maximum value of $r_{k,\eps}$ on $B_{II}$. 
But on $B_{II}$,  $r_{k,\eps}=\eps\n f\n w_{k,\eps}+O(\eps^3)=O(\eps^2)$. Thus we have $r_{k,\eps}\leq C\eps^2$.  Similarly estimating $r_{k,\eps}-Cy^2$ gives the other  side of the inequality.
\end{proof}

\begin{lemma} \label{lemq10} $||r_{k,\eps}||_{C_\eps^{2,\alpha}}=O(\eps^2)$, and 
$|\tilde\n ^2 w_{k,\eps}|=O(\eps)$.
\end{lemma}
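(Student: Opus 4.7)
The plan is to first establish $||r_{k,\eps}||_{C^{2,\alpha}_\eps}=O(\eps^2)$ by applying the global Schauder estimate~(\ref{s-2}) to $r_{k,\eps}$, and then to read off the Hessian bound $|\tilde\nabla^2 w_{k,\eps}|=O(\eps)$ by combining this with the interior equation~(\ref{sk-1}).

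For the Schauder step, $r_{k,\eps}$ satisfies~(\ref{sk-2}) with mixed boundary conditions~(\ref{sk-2-1}), so the four terms on the right-hand side of~(\ref{s-2}) are: (i) $||r_{k,\eps}||_{C^0}=O(\eps^2)$ by Lemma~\ref{lemq9}; (ii) $\eps^2||F_2-\mu_k(\eps)r_{k,\eps}||_{C^\alpha_\eps}=O(\eps^3)$ using~(\ref{sk-3}); (iii) the Neumann data on $B_{III}$ vanishes; (iv) the Dirichlet data on $B_I\cup B_{II}$, which is zero on $B_I$ and equals $\eps\nabla f(x)\cdot\nabla w_{k,\eps}(x,\eps f(x))+O(\eps^3)$ on $B_{II}$. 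Only~(iv) is delicate: its $C^{2,\alpha}_\eps(B_{II})$ norm must be $O(\eps^2)$, and this requires one more derivative of $w_{k,\eps}$ than Lemma~\ref{lemq9} directly provides. I would supply this by a bootstrap using the higher-order analogue of~(\ref{s-1}): the source $F_1$ in~(\ref{sk-1}) is a sum of smooth terms each of size $O(\eps^2)$ in every weighted H\"older norm (using $\mu_k(\eps)-\mu_k=O(\eps^2)$ from Theorem~\ref{thm1}, $\alpha_{k,j}=O(\eps^2)$ from Lemma~\ref{lemm1}, and the inductive hypothesis that Theorem~\ref{thm2} holds for $j<k$), while the Neumann data $\pa U_k/\pa n|_{B_{II}}$ from~(\ref{eq:4}) is $O(\eps^3)$ in every smooth norm. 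This upgrades the bound $||w_{k,\eps}||_{C^{2,\alpha}_\eps}\leq\eps^2$ obtained in the proof of Lemma~\ref{lemq9} to $||w_{k,\eps}||_{C^{3,\alpha}_\eps}=O(\eps^2)$, which in turn bounds the $C^{2,\alpha}_\eps(B_{II})$ norm of $\eps\nabla f\cdot\nabla w_{k,\eps}$ by $O(\eps^2)$, and~(\ref{s-2}) then yields $||r_{k,\eps}||_{C^{2,\alpha}_\eps}=O(\eps^2)$.

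For the Hessian bound, unpacking $||r_{k,\eps}||_{C^{2,\alpha}_\eps}=O(\eps^2)$ through the definition of the weighted norm gives $[r_{k,\eps}]_{C^1}=O(\eps)$, so $|\pa_y^2 w_{k,\eps}|=|\pa_y r_{k,\eps}|=O(\eps)$ and $|\pa_{x_i}\pa_y w_{k,\eps}|=|\pa_{x_i}r_{k,\eps}|=O(\eps)$. For the purely horizontal Hessian components, rearranging~(\ref{sk-1}) gives
\[
\Delta_x w_{k,\eps}=F_1-\mu_k(\eps)w_{k,\eps}-\pa_y^2 w_{k,\eps}=O(\eps)
\]
in $C^\alpha$ on each slice $\{y=\mathrm{const}\}$, since $F_1=O(\eps^2)$, $w_{k,\eps}=O(\eps^2)$, and $\pa_y^2 w_{k,\eps}$ was just controlled. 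Applying the standard Schauder estimate for the Neumann problem on the fixed manifold $M$ slice-by-slice, using the condition $\pa w_{k,\eps}/\pa n=0$ on $\pa M$ from~(\ref{sk-1-1}), yields $|\nabla_x^2 w_{k,\eps}|=O(\eps)$, completing the proof. The main obstacle is the bootstrap to $C^{3,\alpha}_\eps$: one must check that raising the regularity index in~(\ref{s-1}) does not cost powers of $\eps$, which reduces to the explicit smoothness of $F_1$ and the $O(\eps^3)$ smoothness of the Neumann data in~(\ref{eq:4}), combined with the inductive control of $\vphi_{j,\eps}$ for $j<k$.
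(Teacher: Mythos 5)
Your high-level plan (Schauder estimate for $r_{k,\eps}$, then read off the Hessian) is the right shape, but the implementation diverges from the paper's argument in two places where there are genuine gaps.

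\textbf{The bootstrap to $C^{3,\alpha}_\eps$.}  To control the Dirichlet data $\eps\nabla f\cdot\nabla w_{k,\eps}|_{B_{II}}$ in $C^{2,\alpha}_\eps$ you propose to upgrade $\|w_{k,\eps}\|_{C^{2,\alpha}_\eps}=O(\eps^2)$ to $\|w_{k,\eps}\|_{C^{3,\alpha}_\eps}=O(\eps^2)$ via ``the higher-order analogue of~(\ref{s-1}).''  But the paper only establishes (and only invokes from~\cite{GT}) the second-order weighted Schauder estimates~(\ref{s-1})--(\ref{s-2}).  Whether a $C^{3,\alpha}_\eps$ analogue on the collapsing domain $M_\eps$ holds with a constant independent of $\eps$ is precisely the kind of claim that needs its own proof: $M_\eps$ has corners where $B_I,B_{II}$ meet $B_{III}$, and the operator becomes degenerate under the isotropic rescaling used to uniformize the constants.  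The paper deliberately circumvents this.  It sets $h(x):=w_{k,\eps}(x,yf(x))$ for fixed $y$, so that $h$ solves~(\ref{s-4}) \emph{on the fixed manifold} $M$, and applies the \emph{interior} unweighted $C^{3,\alpha}$ Schauder estimate on $\Omega$ (the set where $f$ is non-constant, which is compactly contained in $M$).  Because $\nabla f\equiv 0$ off $\Omega$, the Dirichlet data for $r_{k,\eps}$ on $B_{II}$ is controlled by $\|h\|_{C^{3,\alpha}_\eps(\Omega)}$ alone, and the conversion between weighted and unweighted norms produces an $\eps^{3+\alpha}$ prefactor that lets one close a self-improving inequality of the form $\|r_{k,\eps}\|_{C^{2,\alpha}_\eps}\le C(\eps^2+\eps\|r_{k,\eps}\|_{C^{2,\alpha}_\eps})$.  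This trades your unproved third-order estimate on the thin domain for a standard interior estimate on a fixed domain.

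\textbf{The slice-by-slice argument for $\nabla_x^2 w_{k,\eps}$.}  This step does not work as written.  For $c>0$ the slice $\{y=c\}\cap M_\eps=\{x\in M:\eps f(x)\ge c\}\times\{c\}$ is in general a \emph{proper} subdomain of $M$; its boundary contains not only (part of) $\pa M$, where the Neumann condition~(\ref{sk-1-1}) holds, but also the level set $\{x:\eps f(x)=c\}$ (coming from $B_{II}$), on which you have no boundary condition for the equation $\Delta_x w=F_1-\mu_k(\eps)w-\pa_y^2w$.  So the ``standard Schauder estimate for the Neumann problem on the fixed manifold $M$'' is not applicable.  The paper avoids this by once more working with $h(x)=w_{k,\eps}(x,yf(x))$, which is defined on all of $M$ for every $y\in[0,\eps]$ and satisfies~(\ref{s-4}) with the Neumann condition on all of $\pa M$ (since $f$ is constant near $\pa M$); the global Schauder estimate on $M$ applied to~(\ref{s-4}), with the right-hand side now $O(\eps)$ thanks to the just-proved bound on $r_{k,\eps}$, yields the $O(\eps)$ bound on the horizontal Hessian.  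To salvage your version you would need either to re-parametrize as the paper does, or to establish higher $y$-derivative control of $w_{k,\eps}$ to propagate a $y=0$ bound across the thickness; neither is in your sketch.

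In short: the paper's argument is organized around the change of variable $y\mapsto yf(x)$ and a clever interior/global Schauder iteration precisely to avoid the two steps you rely on (a uniform $C^{3,\alpha}_\eps$ estimate on $M_\eps$, and a Schauder estimate on a variable slice), both of which are presently gaps in your proof.
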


\begin{proof} 
For fixed $y$, let $w=w_{k,\eps}$ and let $h=w(x,y f(x))$.  Then $h$ satisfies the equation
\begin{align}\label{s-4}
\begin{split}
&\Delta h+\mu_k(\eps)h\\
&=-\left(1-y^2|\n f|^2\right)\frac{\pa r_{k,\eps}}{\pa y}+2y\langle\n f, \n r_{k,\eps}\rangle+y r_{k,\eps}\Delta f+F_1(x,y f(x)).
\end{split}
\end{align}
Let $\Omega\subset M$ such that on $M\backslash\Omega$, {$f$ is identically equal to a positive constant $\delta$}. 
By the Schauder interior estimate, we have
\begin{align*}
&
||h||_{C^{3,\alpha}(\Omega)}\leq C(\eps^2+||r_{k,\eps}||_{C^{2,\alpha}}).
\end{align*}

Note that the above $C^{2,\alpha}$-norm is a function of $y$, and this norm is unscaled.  By Lemma~\ref{lemq9}, ~\eqref{sk-2} and (\ref{sk-3}) the global Schauder estimate ~\eqref{s-2} gives
\begin{align*}
&
||r_{k,\eps}||_{C_\eps^{2,\alpha}}\leq C(\eps^2+||\eps\n f\n h||_{C^{2,\alpha}_\eps} )\leq C(\eps^2+||h||_{C_\eps^{3,\alpha}(\Omega)}).
\end{align*}
The relation between weighted and the usual H\"older norms is (up to a constant)
\[
\eps^{k+\alpha}||u||_{C^{k,\alpha}}\leq ||u||_{C_\eps^{k,\alpha}}\leq \eps^{k+\alpha}||u||_{C^{k,\alpha}}+||u||_{C^0}.
\]
Thus we have
$$||  r_{k,\eps}||_{C_\eps^{2,\alpha}}\leq C(\eps^2+\eps^{3+\alpha}||h||_{C^{3,\alpha}(\Omega)}+||h||_{C^0(\Omega)})$$
$$\leq C(\eps^2+\eps^{3+\alpha}(\eps^2+||r_{k,\eps}||_{C^{2,\alpha}})) \leq C(\eps^2+\eps ||  r_{k,\eps}||_{C_\eps^{2,\alpha}}).$$
Therefore $||  r_{k,\eps}||_{C_\eps^{2,\alpha}}=O(\eps^2)$, which also  implies  that 
\[
\frac{\pa^2 w_{k,\eps}}{\pa y^2}, \frac{\pa^2 w_{k,\eps}}{\pa x_j \pa y}=O(\eps),
\]
where $(x_1,\cdots,x_n)$ is any {local} coordinate system on $M$.
Using the global Schauder estimate on~\eqref{s-4} again, we get
\[
\frac{\pa^2 w_{k,\eps}}{\pa x_j \pa x_i}=O(\eps).
\]
\end{proof}

\begin{lemma} 
$|\n r_{k,\eps}|=O(\eps^2)$.
\end{lemma}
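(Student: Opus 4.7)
The goal is to upgrade the bound $|\nabla r_{k,\epsilon}| = O(\epsilon)$ (which is immediate from $\|r_{k,\epsilon}\|_{C^{2,\alpha}_\epsilon} = O(\epsilon^2)$ in Lemma~\ref{lemq10}) to $|\nabla r_{k,\epsilon}| = O(\epsilon^2)$. The extra factor of $\epsilon$ must be extracted from the Dirichlet condition $r_{k,\epsilon}|_{B_I}=0$ in~\eqref{sk-2-1}, which forces every tangential derivative to vanish on $B_I$. My plan is to differentiate the boundary-value problem~\eqref{sk-2}--\eqref{sk-2-1} tangentially and then apply a barrier-type maximum principle on the thin domain.

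Fix a local coordinate $x_j$ on $M$ and set $u := \partial_{x_j} r_{k,\epsilon}$. Differentiating~\eqref{sk-2} gives
\[
\tilde{\Delta} u + \mu_k(\epsilon)\, u = \partial_{x_j} F_2 \quad\text{on } M_\epsilon,
\]
while the boundary values follow from~\eqref{sk-2-1}: $u\equiv 0$ on $B_I$; $\partial_n u = 0$ on $B_{III}$ (using that $f$ is constant near $\pa M$); and on $B_{II}$ the chain-rule identity
\[
(\partial_{x_j} r_{k,\epsilon})(x,\epsilon f(x)) = \partial_{x_j}\bigl[r_{k,\epsilon}(x,\epsilon f(x))\bigr] - \epsilon\,(\partial_{x_j} f)\,(\partial_y r_{k,\epsilon})(x,\epsilon f(x))
\]
combined with $r_{k,\epsilon}|_{B_{II}} = \epsilon\,\nabla f\cdot\nabla w_{k,\epsilon} + O(\epsilon^3)$ and $|\tilde{\nabla}^2 w_{k,\epsilon}| = O(\epsilon)$ from Lemma~\ref{lemq10} shows $u|_{B_{II}} = O(\epsilon^2)$. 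The forcing is bounded by $|\partial_{x_j} F_2|\leq C\epsilon$, since the dominant term in~\eqref{sk-2} is $y(\Delta\eta_k+\mu_k\eta_k)$ and the inductive hypothesis of Theorem~\ref{thm2} controls the remaining $\alpha_{k,j}$-terms.

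Invoking the a priori estimate $|u|=O(\epsilon)$ from Lemma~\ref{lemq10} then yields $|\tilde{\Delta} u| \leq C\epsilon$ throughout $M_\epsilon$. I next introduce the $y$-only barrier $\phi(y) = A\epsilon y + B\epsilon y^2$, with $A > 0$ large enough that $\phi|_{B_{II}} \geq A\epsilon^2 f_{\min} + O(\epsilon^3) \geq |u|_{B_{II}}|$, and $B < 0$ with $|B|$ large enough that $\tilde{\Delta}(\phi\pm u) = 2B\epsilon \pm (\partial_{x_j} F_2 - \mu_k(\epsilon)u) \leq 0$. Combined with $\phi\pm u \geq 0$ on $B_I\cup B_{II}$ and $\partial_n(\phi\pm u) = 0$ on $B_{III}$, the mixed Dirichlet--Neumann maximum principle forces $|u|\leq \phi(y) \leq C\epsilon^2$ on all of $M_\epsilon$, which is the claim.

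The main obstacle is exactly that extra factor of $\epsilon$: tangential differentiation of the weighted Schauder estimate only returns $|\nabla r_{k,\epsilon}| = O(\epsilon)$, and attempting to bound $\partial_y \partial_{x_j} r_{k,\epsilon}$ in $C^0$ via Schauder on $u$ produces only $O(1/\epsilon)$. The gain comes from exploiting simultaneously the two small pieces of boundary data, zero on $B_I$ and $O(\epsilon^2)$ on $B_{II}$, through a barrier that itself vanishes on $B_I$ and is of size $O(\epsilon^2)$ on $B_{II}$. The delicate point is the chain-rule computation of $u|_{B_{II}}$: without the sharp bound $|\tilde{\nabla}^2 w_{k,\epsilon}|=O(\epsilon)$ from Lemma~\ref{lemq10}, one would obtain only $u|_{B_{II}} = O(\epsilon)$, which is insufficient to drive the barrier.
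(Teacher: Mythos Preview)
Your overall strategy—differentiate the equation for $r_{k,\epsilon}$ in a tangential direction and run a barrier/maximum-principle argument on the thin domain—is the same as the paper's, but two steps fail as written.

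The serious gap is the boundary condition on $B_{III}$. The claim $\partial_n u = 0$ does \emph{not} follow from $f$ being constant near $\partial M$; that assumption only guarantees that the outward normal to $B_{III}$ lies in the $M$-directions. From the Neumann condition $\partial_n r_{k,\epsilon}=0$ on $B_{III}$ you get, for $u=\partial_{x_j} r_{k,\epsilon}$,
\[
\partial_n u \;=\; \partial_{x_j}(\partial_n r_{k,\epsilon}) \;+\; [\partial_n,\partial_{x_j}]\,r_{k,\epsilon}.
\]
Unless the coordinate $\partial_{x_j}$ is adapted to $\partial M$ (tangential everywhere, in boundary-normal coordinates), the commutator is a nonzero first-order operator and contributes a term of size $O(|\nabla r_{k,\epsilon}|)$, which a priori is only $O(\epsilon)$. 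A fixed Cartesian coordinate is never tangential along all of a curved $\partial M$, so your barrier cannot be closed on $B_{III}$. The paper deals with this by working with a globally defined vector field $R$ and splitting into the cases $R\perp\partial M$ (where $R(r_{k,\epsilon})$ itself vanishes on $B_{III}$, giving Dirichlet data) and $R$ tangent to $\partial M$. In the tangential case one only has $\partial_n(R r_{k,\epsilon}) = [\partial_n,R]\,r_{k,\epsilon}+O(\epsilon^2)$, and the paper absorbs this via an auxiliary cut-off $\xi$ with $\xi=O(\epsilon)$ and $\partial_n\xi=1$ on $\partial M$: the modified barrier $R(r_{k,\epsilon}) - C_1\xi\,\max|\nabla r_{k,\epsilon}| + C_2 y^2$ acquires a strictly negative normal derivative on $B_{III}$, and the resulting bound $\max|\nabla r_{k,\epsilon}| \le C(\epsilon^2 + \epsilon\,\max|\nabla r_{k,\epsilon}|)$ closes by absorption. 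Your argument has no substitute for this step.

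A secondary issue is the equation for $u$. On a Riemannian manifold, differentiating \eqref{sk-2} produces the commutator $[\Delta,\partial_{x_j}]\,r_{k,\epsilon}$, a second-order expression in $r_{k,\epsilon}$ which, by $\|r_{k,\epsilon}\|_{C^{2,\alpha}_\epsilon}=O(\epsilon^2)$, is only $O(1)$, not $O(\epsilon)$. Hence $|\tilde\Delta u|=O(1)$, and your barrier with $\tilde\Delta\phi = 2B\epsilon$ and $B$ a fixed constant cannot dominate it. The paper uses $Cy^2$ (so $\tilde\Delta(Cy^2)=2C$), which still has size $O(\epsilon^2)$ on $M_\epsilon$; this is an easy fix, but as stated your choice of $\phi$ does not work.
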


\begin{proof} We need to prove that for any first order differential operator $R$ on $M$,
\[
R(r_{k,\eps})=O(\eps^2)
\]
uniformly for any $0\leq y\leq \eps$.
By Lemma~\ref{lemq10}, this is equivalent to 
\[
v=R(r_{k,\eps}(x,yf(x))) =O(\eps^2)
\]
uniformly for any $0\leq y\leq \eps$.

We first assume that the vector field $R$ is vertical to $\pa M$. Then by Lemma~\ref{lemq10},
$v=0$ on $B_I$, and 
\begin{equation}\label{tk-1}
v=R(\eps\n f(x)\n w_{k,\eps}(x,\eps f(x)))=O(\eps^2) \textrm{ on } B_{II}. 
\end{equation}
On $B_{III}$, since $R$ is vertical to $\pa M$, $v=g\frac{\pa}{\pa n}$ on $\pa M$ for some function $g$. Thus by Lemma~\ref{lemq10} again, 
\begin{equation}\label{tk-2}
v=g\frac{\pa}{\pa n} (r_{k,\eps}(x, y f(x)))=O(\eps^2).
\end{equation}
By~\eqref{sk-2} and Lemma~\ref{lemq10}, we have
\begin{equation}\label{tk-7}
\tilde\Delta R(r_{k,\eps})+\mu_k(\eps)  R(r_{k,\eps})=R(F_2)+[\Delta,R] r_{k,\eps}=O(1).
\end{equation}
Thus for $C>0$ large enough, we have
\[
\tilde\Delta (R(r_{k,\eps})+Cy^2)>0,
\]
and by the maximum principle, $R(r_{k,\eps})\leq O(\eps^2)$. Like in the proof of Lemma~\ref{lemq9}, the other side of the inequality can be obtained by estimating $R(r_{k,\eps})-Cy^2$.

Now we assume that $R$ is tangential on $\pa M$. We have similar estimates on $B_{I}$ and $B_{II}$ as above. On $B_{III}$, we note that
\begin{equation}\label{tk-3}
\frac{\pa}{\pa n} (R(r_{k,\eps}(x,y f(x))) =\tilde R(r_{k,\eps}(x,y f(x)))+O(\eps^2),
\end{equation}
where $\tilde R=[\frac{\pa}{\pa n}, R]$.
Let $\xi$ be a function on $M$ such that on $\pa M$, $\xi=0$; $\pa\xi/\pa n=1$; and on $M$,  $\eps\Delta\xi$ bounded; $\xi=O(\eps)$. We construct such a function $\xi$ as follows: let $\rho_(x)$ be a cut-off function on $\mathbb R$ such that $\rho\geq 0; \rho'(0)=0$ and $\rho(x)=0$ for $x\geq\eps$; $\eps|\rho_1|$ and $\eps^2|\rho_1''|$ are bounded.  Let 
$
\xi=-d\rho(d)$,
where $d$ is the distance function to the boundary $\pa M$.
Let
\[
\tilde v=R(r_{k,\eps}(x,y f(x)))-C_1\xi \max|\n r_{k,\eps}|+ C_2y^2
\]
for large $C_1, C_2$. By choosing $C_1$ large enough, from~\eqref{tk-3}, we have
\[
\frac{\pa\tilde v}{\pa n}<0
\]
on $B_{III}$. Fixing $C_1$, we choose $C_2$ large enough. Then by Lemma~\ref{lemq9}, we have
$
\tilde\Delta\tilde v>0$.
By the maximum principle, the maximum point of $\tilde v$ must be reached on $B_I\cup B_{II}$.  By the boundary conditions~\eqref{sk-2-1}, we have
\[
\tilde v\leq C(\eps^2+\eps\max|\n r_{k,\eps}|).
\]
Thus we have
\[
R(r_{k,\eps}(x,yf(x)))\leq C(\eps^2+\eps \max|\n r_{k,\eps}|).
\]
Since $R$ is arbitrary, this yields
\[
\max|\n r_{k,\eps}|\leq C(\eps^2+C_1\delta \max|\n r_{k,\eps}|).
\]
\end{proof}

{\bf Proof of Theorem~\ref{thm2}.} 
The proof is similar to that of Lemma~\ref{lemq10}. On $B_I$, $R(r_{k,\eps})=0$; on $B_{II}$, by~\eqref{tk-1}, we have
\begin{equation}\label{uk-1}
||R(r_{k,\eps})||_{C_\eps^{2,\alpha}(B_{II})}\leq C\eps^2+C\eps^{3+\alpha}||h||_{C^{4,\alpha}(\Omega)}.
\end{equation}
On $B_{III}$, we have two cases.  If $R$ is vertical to $\pa M$, by~\eqref{tk-2},
\[
||R(r_{k,\eps})||_{C_\eps^{2,\alpha}(B_{III})}\leq C(\eps^2+\eps||\n r_{k,\eps}||_{C_\eps^{2,\alpha}}).
\]
If $R$ is tangential to $\pa M$, by~\eqref{tk-3},
\[
\left|\left|\frac{\pa}{\pa n} R(r_{k,\eps})\right|\right|_{C_\eps^{1,\alpha}(B_{III})}\leq C\eps^2+||\n r_{k,\eps}||_{C_\eps^{1,\alpha}}.
\]
By the Schauder estimate and ~\eqref{tk-7}, 
\begin{equation}\label{uk-3}
||R(r_{k,\eps})||_{C_\eps^{2,\alpha}}\leq C(\eps^2+\eps^{3+\alpha}||h||_{C^{4,\alpha}(\Omega)}+||\n r_{k,\eps}||_{C_\eps^{1,\alpha}}).
\end{equation}
By the Schauder interior estimate, 
\begin{equation}\label{uk-2}
||h||_{C^{4,\alpha}(\Omega)}\leq C(\eps^2+||r_{k,\eps}||_{C^{3,\alpha}})\leq C(\eps^2+\eps^{-2-\alpha}||\n r_{k,\eps}||_{C_\eps^{2,\alpha}}).
\end{equation}
Combining~\eqref{uk-1},~\eqref{uk-3},~\eqref{uk-2}, we get
\[
||\n r_{k,\eps}||_{C_\eps^{2,\alpha}}\leq C(\eps^2+\eps||\n r_{k,\eps}||_{C_\eps^{2,\alpha}}+||\n r_{k,\eps}||_{C_\eps^{1,\alpha}}),
\]
which implies the theorem by the interpolation inequalities.
\qed

{\section*{Acknowledgments} 
The first author is supported by NSF grant DMS-09-04653.  The second author gratefully acknowledges the support of the Hausdorff Center for Mathematics and {the Max Planck Institut f\"ur Mathematik in Bonn.  Both authors would like to thank the anonymous referee for comments which improved the quality of the paper.}} 

\bibliographystyle{}
\bibliography{}
\begin{bibdiv}
\begin{biblist}

\bib{andrews}{article}
{author={Andrews, B.}, title={Gradient and oscillation estimates and their applications in geometric PDE},
journal={AMS/IP Studies in Advanced Mathematics, 5th ICCM},
date={2010},
}

\bib{ac}{unpublished}{author={Andrews, B.}, author={Clutterbuck, J.}, title={Proof of the fundamental gap conjecture}, note={preprint, arXiv 1006.1686}, year={2010}}	
	
\bib{BE}{article}{
   author={Bakry, Dominique},
   author={{\'E}mery, Michel},
   title={Diffusions hypercontractives},
   language={French},
   conference={
      title={S\'eminaire de probabilit\'es, XIX, 1983/84},
   },
   book={
      series={Lecture Notes in Math.},
      volume={1123},
      publisher={Springer},
      place={Berlin},
   },
   date={1985},
   pages={177--206},
}


\bib{bakry-qian}{article}{
   author={Bakry, Dominique},
   author={Qian, Zhongmin},
   title={Some new results on eigenvectors via dimension, diameter, and
   Ricci curvature},
   journal={Adv. Math.},
   volume={155},
   date={2000},
   number={1},
   pages={98--153},
}

\bib{chavel}{book}{
   author={Chavel, Isaac},
   title={Eigenvalues in Riemannian geometry},
   series={Pure and Applied Mathematics},
   volume={115},
   note={Including a chapter by Burton Randol;
   With an appendix by Jozef Dodziuk},
   publisher={Academic Press Inc.},
   place={Orlando, FL},
   date={1984},
}

\bib{cc}{article}{
   author={Cheeger, J.},
   author={Colding, T.},
   title={On the structure of spaces with Ricci curvature bounded below, III},
   journal={J. Diff. Geom.},
   volume={54},
   date={2000},
   number={1},
   pages={37--74},,
}

\bib{ds}{article}{
   author={Davies, E. B.},
   author={Simon, B.},
   title={Ultracontractivity and the heat kernel for Schr\"odinger operators
   and Dirichlet Laplacians},
   journal={J. Funct. Anal.},
   volume={59},
   date={1984},
   number={2},
   pages={335--395},
}

\bib{f-s}{article}{
   author={Friedlander, Leonid},
   author={Solomyak, Michael},
   title={On the spectrum of the Dirichlet Laplacian in a narrow strip},
   journal={Israel J. Math.},
   volume={170},
   date={2009},
   pages={337--354},
}

\bib{futaki}{unpublished}{author={Futaki, Akito},author={Sano, Yuji}, title={Lower Diameter  Bounds for  Compact Shrinking Ricci  Solitions}, note={arXiv:1007.1759v1}}

\bib{dg}{article}{
   author={Grieser, D.},
   title={Thin tubes in mathematical physics, global analysis and spectral geometry},
   journal={Proc. Sympos. Pure Math.},
   volume={77},
   date={2008}, 

}

\bib{gj}{article}{
   author={Grieser, D.},
   author={Jerison, D.},
   title={Asymptotics of eigenfunctions on plane domains},
   journal={Pacific J. Math.},
   volume={240},
   date={2009},
   number={1},
   pages={109--133},

}

\bib{GT}{book}{ author={Gilbarg, David}, author={Trudinger, Neil S.},   title={Elliptic partial differential equations of second order},   series={Classics in Mathematics},   note={Reprint of the 1998 edition},   publisher={Springer-Verlag},   place={Berlin},   date={2001},   
}

\bib{kir-sim}{article}{
   author={Kirsch, W.},
   author={Simon, B.},
   title={Comparison theorems for the gap of Schr\"odinger operators},
   journal={J. Funct. Anal.},
   volume={75},
   date={1987},
   number={2},
   pages={396--410},
}

\bib{kroger}{article}{
   author={Kr{\"o}ger, Pawel},
   title={On the spectral gap for compact manifolds},
   journal={J. Differential Geom.},
   volume={36},
   date={1992},
   number={2},
   pages={315--330},
}


\bib{li-yau}{article}{
   author={Li, P.},
   author={Yau, S. T.},
   title={Estimates of eigenvalues of a compact Riemannian manifold},
   book={
      series={Proc. Sympos. Pure Math., XXXVI},
      publisher={Amer. Math. Soc.},
      place={Providence, R.I.},
   },
   date={1980},
   pages={205--239},
}


\bib{ling}{article}{
  author={Ling, Jun},
  title={Estimates on the lower bound of the first gap},
  journal={Comm. Anal. Geom.},
  volume={16},
  date={2008},
  number={3},
  pages={539--563},
}

\bib{ling-lu}{article}{ author={Ling, Jun},   author={Lu, Zhiqin},   title={Bounds of eigenvalues on Riemannian manifolds},   conference={      title={Trends in partial differential equations},   },   book={     series={Adv. Lect. Math. (ALM)},     volume={10},      publisher={Int. Press, Somerville, MA},   },   date={2010},   pages={241--264},   
}

\bib{lott}{article}{
   author={Lott, John},
   title={Some geometric properties of the Bakry-\'Emery-Ricci tensor},
   journal={Comment. Math. Helv.},
   volume={78},
   date={2003},
   number={4},
   pages={865--883},
}

  
   \bib{maliu}{article}{author={Ma, L.}, author={Liu, B.}, title={Li-Yau type eigenvalue estimates for drifting Laplacians}, note={preprint}}
   
      \bib{maliu2}{article}{author={Ma, L.}, author={Liu, B.}, title={Convex eigenfunction of a drifting Laplacian operator and the fundamental gap}, journal={Pacific J. Math.}, volume={240}, number={2}, year={2009}, pages={343--361}}
    
  
   \bib{S-Yau}{book}{
   author={Schoen, R.},
   author={Yau, S.-T.},
   title={Lectures on differential geometry},
   series={Conference Proceedings and Lecture Notes in Geometry and
   Topology, I},
   publisher={International Press},
   place={Cambridge, MA},
   date={1994},
}


\bib{swyy}{article}{
   author={Singer, I. M.},
   author={Wong, Bun},
   author={Yau, Shing-Tung},
   author={Yau, Stephen S.-T.},
   title={An estimate of the gap of the first two eigenvalues in the
   Schr\"odinger operator},
   journal={Ann. Scuola Norm. Sup. Pisa Cl. Sci. (4)},
   volume={12},
   date={1985},
   number={2},
   pages={319--333},
}

\bib{uh}{article}{author={Uhlenbeck, Karen}, title={Generic properties of eigenfunctions}, journal={Amer. J. Math.}, volume = {98}, date = {1976}, number = {4}, pages ={1059--1078}, 
}


\bib{ww}{article}{
   author={Wei, Guofang},
   author={Wylie, Will},
   title={Comparison geometry for the Bakry-Emery Ricci tensor},
   journal={J. Differential Geom.},
   volume={83},
   date={2009},
   number={2},
   pages={377--405},
}

\bib{yau}{article}{author={Yau, Shing-Tung}, title={Nonlinear analysis in geometry},   journal={Enseign. Math. (2)},   volume={33}, date={1987},   number={1-2},   pages={109--158},   
}

\bib{yau-3}{article}{
   author={Yau, Shing-Tung},
   title={An estimate of the gap of the first two eigenvalues in the
   Schr\"odinger operator},
   conference={
      title={Lectures on partial differential equations},
   },
   book={
      series={New Stud. Adv. Math.},
      volume={2},
      publisher={Int. Press, Somerville, MA},
   },
   date={2003},
   pages={223--235},
}

\bib{yau-4}{article}{
   author={Yau, Shing-Tung},
   title={Gap of the first two eigenvalues of the Schr\"odinger operator
   with nonconvex potential},
   journal={Mat. Contemp.},
   volume={35},
   date={2008},
   pages={267--285},
}

\bib{zhong-yang}{article}{author={Zhong, J. Q.},   author={Yang, H. C.},   title={On the estimate of the first eigenvalue of a compact Riemannian   manifold},   journal={Sci. Sinica Ser. A},   volume={27},   date={1984}, number={12},   pages={1265--1273},   
}

\bib{zhong-yu}{article}{
   author={Yu, Qi Huang},
   author={Zhong, Jia Qing},
   title={Lower bounds of the gap between the first and second eigenvalues
   of the Schr\"odinger operator},
   journal={Trans. Amer. Math. Soc.},
   volume={294},
   date={1986},
   number={1},
   pages={341--349},
}

\end{biblist}
\end{bibdiv}
\end{document}